\documentclass[12pt]{amsart}
\usepackage[letterpaper, margin=1in]{geometry}
\usepackage{graphicx}
\usepackage{amsmath,amssymb,amsthm,mathtools}
\usepackage[all]{xy}
\usepackage[utf8]{inputenc}
\usepackage[hidelinks]{hyperref}
\usepackage{enumitem}
\usepackage{xcolor}
\usepackage{tikz-cd}
\usepackage{tikz}
\usepackage{import}

\title{Torsors under the universal Jacobian over $\mathcal{M}_g$}

\newtheorem{theorem}{Theorem}

\newtheorem{lemma}{Lemma}
\newtheorem{proposition}{Proposition}
\newtheorem{corollary}{Corollary}

\definecolor{yellow}{rgb}{0.99,0.99,0.80}
\begin{document}

\author{Qixiao MA}
\address{Institute of Mathematical Sciences, ShanghaiTech University, 201210, Shanghai, China}
\email{maqx1@shanghaitech.edu.cn}

\begin{abstract}
We consider the universal family of smooth genus-$g$ curves over $\mathbb{C}$.
We show that for $g\geq3$, every torsor under the relative Jacobian is isomorphic to a connected component of the relative Picard scheme. As a byproduct, we show that $\mathrm{Br}(\mathcal{M}_{3,1})=\mathbb{Z}/2\mathbb{Z}$ over $\mathbb{C}$ and $\mathrm{H}_2(\Gamma_{3,1},\mathbb{Z})=\mathbb{Z}/2\mathbb{Z}$, pinning down the torsion subgroup in \cite[Theorem 1.2]{zbMATH01991000}.

\end{abstract}

\subjclass[2020]{Primary: 14H10; Secondary: 14D23, 14F22, 57K20}
\maketitle
\section{Introduction}
Throughout we work over the field of complex numbers $\mathbb{C}$.
\subsection{Main result}Let $\mathcal{M}_{g}$ be the moduli stack of smooth genus-$g$ curves, and let $\pi\colon \mathcal{M}_{g,1}\to \mathcal{M}_{g}$ be the universal family. 
The relative Jacobian $\mathrm{Pic}^0_{\mathcal{M}_{g,1}/\mathcal{M}_{g}}$ is a family of abelian varieties over $\mathcal{M}_g$. We show that:
\begin{theorem}\label{Main_Theorem}For $g\geq3$, we have $$\mathrm{H}^1_{\mathrm{\acute{e}t}}(\mathcal{M}_{g},\mathrm{Pic}^0_{\mathcal{M}_{g,1}/\mathcal{M}_{g}})\cong{\mathbb{Z}}/{(2g-2)\mathbb{Z}}\cdot[\mathrm{Pic}^1_{\mathcal{M}_{g,1}/\mathcal{M}_{g}}].$$
\end{theorem}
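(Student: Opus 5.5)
We will compute the group through the Leray spectral sequence for $\pi\colon \mathcal{M}_{g,1}\to\mathcal{M}_g$ with coefficients in $\mathbb{G}_m$. Write $J=\mathrm{Pic}^0_{\mathcal{M}_{g,1}/\mathcal{M}_g}$ and $P=\mathrm{Pic}_{\mathcal{M}_{g,1}/\mathcal{M}_g}$. There is an exact sequence of étale sheaves $0\to J\to P\to\mathbb{Z}\to 0$, given by the degree map. Taking cohomology gives
\[
P(\mathcal{M}_g)\xrightarrow{\deg}\mathbb{Z}\xrightarrow{\delta}\mathrm{H}^1(\mathcal{M}_g,J)\to\mathrm{H}^1(\mathcal{M}_g,P)\to\mathrm{H}^1(\mathcal{M}_g,\mathbb{Z}).
\]
Here $\delta(1)=[\mathrm{Pic}^1]$. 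Since $\mathcal{M}_g$ is normal, $\mathrm{H}^1(\mathcal{M}_g,\mathbb{Z})=0$. We will first show that the image of $\deg$ is exactly $(2g-2)\mathbb{Z}$; this is the Franchetta-type statement that $P(\mathcal{M}_g)$ is generated modulo $J(\mathcal{M}_g)$ by the relative canonical class. For $g\ge3$ we use:
\begin{itemize}
\item the strong Franchetta conjecture (Arbarello--Cornalba, Mestrano, Kouvidakis), which gives $P(\mathcal{M}_g)=\mathbb{Z}\cdot\omega$;
\item the vanishing $J(\mathcal{M}_g)=0$, because the universal Jacobian has no nonzero rational sections.
\end{itemize}
The image of $\delta$ is then $\mathbb{Z}/(2g-2)$, and it remains to prove $\mathrm{H}^1(\mathcal{M}_g,J)\to\mathrm{H}^1(\mathcal{M}_g,P)$ is zero, i.e.\ that $\mathrm{H}^1(\mathcal{M}_g,P)$ is small enough.

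To control $\mathrm{H}^1(\mathcal{M}_g,P)$ we use the Leray spectral sequence $E_2^{p,q}=\mathrm{H}^p(\mathcal{M}_g,R^q\pi_*\mathbb{G}_m)\Rightarrow\mathrm{H}^{p+q}(\mathcal{M}_{g,1},\mathbb{G}_m)$, with $\pi_*\mathbb{G}_m=\mathbb{G}_m$ and $R^1\pi_*\mathbb{G}_m=P$. This gives the exact sequence
\[
\mathrm{Br}(\mathcal{M}_g)\to\ker\bigl(\mathrm{Br}'(\mathcal{M}_{g,1})\to \mathrm{H}^0(R^2\pi_*\mathbb{G}_m)\bigr)\to\mathrm{H}^1(\mathcal{M}_g,P)\to\mathrm{H}^3(\mathcal{M}_g,\mathbb{G}_m)\to\mathrm{H}^3(\mathcal{M}_{g,1},\mathbb{G}_m).
\]
On fibres, $R^2\pi_*\mathbb{G}_m$ vanishes because the Brauer group of a curve over an algebraically closed field is zero. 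The relevant term is therefore the relative Brauer group $\mathrm{Br}(\mathcal{M}_{g,1})/\pi^*\mathrm{Br}(\mathcal{M}_g)$, together with the kernel of $\pi^*$ on $\mathrm{H}^3(\mathbb{G}_m)$.

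For both we pass to analytic or topological computations on the orbifolds. By the exponential sequence and $\mathrm{H}^i(\mathcal{M}_g,\mathcal{O})$ Hodge-theoretic input in low degree, these groups reduce to torsion in $\mathrm{H}^2$ and $\mathrm{H}^3$ of the mapping class groups $\Gamma_g$ and $\Gamma_{g,1}$ with $\mathbb{Z}$-coefficients. For $g\ge3$ we use $\mathrm{H}_1(\Gamma_g)=0$ and $\mathrm{H}_2(\Gamma_g)=\mathbb{Z}$ (Harer; for $g=3$, $\mathbb{Z}\oplus$ possibly $\mathbb{Z}/2$). The pullback map is analysed through the Birman exact sequence $1\to\pi_1(\Sigma_g)\to\Gamma_{g,1}\to\Gamma_g\to1$ and its Hochschild--Serre spectral sequence.

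The key claim is that any class in $\mathrm{H}^1(\mathcal{M}_g,J)$ maps to a class in $\mathrm{H}^1(\mathcal{M}_g,P)$ that becomes trivial after restriction to the generic point. We will try to prove it via the period index-type argument: the torsor is split by $\pi$ itself, since $J$-torsors pulled back to $\mathcal{M}_{g,1}$ acquire the section. A torsor $T$ pulled back along $\pi$ is trivial, so $[T]$ lies in the kernel of $\mathrm{H}^1(\mathcal{M}_g,J)\to\mathrm{H}^1(\mathcal{M}_{g,1},J)$. Via the above sequence this kernel is identified with the transgression image of the relative Picard/Brauer data. Thus we need to show that this kernel is generated by $\delta(1)$.

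We expect the main obstacle to be exactly this Brauer/$\mathrm{H}^3$ computation: showing that the boundary contribution from $\mathrm{Br}(\mathcal{M}_{g,1})$ and $\ker(\mathrm{H}^3(\mathcal{M}_g,\mathbb{G}_m)\to\mathrm{H}^3(\mathcal{M}_{g,1},\mathbb{G}_m))$ produces nothing beyond $\mathbb{Z}/(2g-2)$. In particular, for $g=3$ the hyperelliptic-type $2$-torsion must be handled separately, which is consistent with the byproduct $\mathrm{Br}(\mathcal{M}_{3,1})=\mathbb{Z}/2$. We would attempt it by computing $d_2$ differentials in the Hochschild--Serre spectral sequence using the Euler class of the Birman extension. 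That Euler class is $(2-2g)$ times a generator, which is what should produce the order $2g-2$.
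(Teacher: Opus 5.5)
\paragraph{Verdict: genuine gap.} Your first two reductions coincide with the paper's. Strong Franchetta shows that the degree map on global sections of $\mathrm{Pic}_{\mathcal{M}_{g,1}/\mathcal{M}_g}$ has image $(2g-2)\mathbb{Z}$. Since $\mathrm{H}^1(\mathcal{M}_g,\underline{\mathbb{Z}})=0$, the theorem is then equivalent to $\mathrm{H}^1_{\textrm{\'et}}(\mathcal{M}_g,\mathrm{Pic}_{\mathcal{M}_{g,1}/\mathcal{M}_g})=0$. The two-row Leray sequence reduces this to two statements:
(a) $\pi^*\colon\mathrm{Br}'(\mathcal{M}_g)\to\mathrm{Br}'(\mathcal{M}_{g,1})$ is surjective;
(b) $\pi^*$ is injective on $\mathrm{H}^3_{\textrm{\'et}}(-,\mathbb{G}_m)$.
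A small correction: the vanishing of $\mathrm{R}^2\pi_*\mathbb{G}_m$ is not a statement about Brauer groups of geometric fibres, because stalks are computed over strictly henselian bases. It needs the Kummer sequence, proper base change for $\mu_n$, and stalkwise surjectivity of the degree, as in Lemma~\ref{higher-vanishing}.

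From there on, the proposal proves neither (a) nor (b), and these two statements are the entire content of the theorem. Your expectation that the Euler-class computation ``is what should produce the order $2g-2$'' aims at the wrong target. The Brauer and $\mathrm{H}^3$ terms feed into $\mathrm{H}^1(\mathcal{M}_g,\mathrm{Pic}_{\mathcal{M}_{g,1}/\mathcal{M}_g})$, which must vanish; the $2g-2$ is already fully accounted for by Franchetta. The ``period--index'' step is also circular. There is no a priori reason that an arbitrary $\mathrm{Pic}^0_{\mathcal{M}_{g,1}/\mathcal{M}_g}$-torsor acquires a section after pullback to $\mathcal{M}_{g,1}$. The torsors $\mathrm{Pic}^d$ do, via the tautological point, but for a general torsor this is a consequence of the theorem, not an input.

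\paragraph{What is missing for (b): homological stability.} The paper proceeds as follows:
\begin{enumerate}
\item It passes to $\mu_{\ell^\infty}$-coefficients via the Kummer sequence.
\item It identifies $\mathrm{H}^3_{\textrm{\'et}}(\mathcal{M}_{g,n},\mu_{\ell^\infty})$ with $\mathrm{H}^3(\Gamma_g^n,\mathbb{Q}_\ell/\mathbb{Z}_\ell)$ by the comparison theorem and Teichm\"uller uniformization.
\item Since $\mathbb{Q}_\ell/\mathbb{Z}_\ell$ is injective, it rewrites this group as $\mathrm{Hom}(\mathrm{H}_3(\Gamma_g^n,\mathbb{Z}),\mathbb{Q}_\ell/\mathbb{Z}_\ell)$.
\item Injectivity of $\pi^*$ then follows from surjectivity of $\mathrm{H}_3(\Gamma_g^1,\mathbb{Z})\to\mathrm{H}_3(\Gamma_g,\mathbb{Z})$, which is Harer stability in Wahl's range $k\le\frac{2}{3}g+1$.
\end{enumerate}
Your Euler-class idea cannot replace this. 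The projection formula $\pi_!(e\cup\pi^*x)=(2-2g)x$ only shows that $\ker\pi^*$ is killed by $2g-2$. That leaves open exactly the primes $\ell\mid 2g-2$, in particular $\ell=2$. A direct Hochschild--Serre analysis would instead require showing that $\mathrm{d}_2^{1,1}$ and $\mathrm{d}_3^{0,2}$ have zero image in $\mathrm{H}^3(\Gamma_g,\mathbb{Q}_\ell/\mathbb{Z}_\ell)$, and you have no input for that. The exponential sequence is also the wrong tool here. It computes analytic $\mathcal{O}^*$-cohomology, which differs from \'etale $\mathbb{G}_m$-cohomology on the non-proper $\mathcal{M}_g$, so one has to work with torsion coefficients.

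\paragraph{What is missing for (a).} The groups $\mathrm{H}_2(\Gamma_g,\mathbb{Z})$ alone say nothing about whether $\pi^*$ hits $\mathrm{Br}'(\mathcal{M}_{g,1})$. For $g\ge4$ both Brauer groups are known to vanish. For $g=3$ the paper proves
\[
\mathrm{H}^2(\mathcal{M}_{3,1},\mu_m)=\pi^*\mathrm{H}^2(\mathcal{M}_3,\mu_m)+\delta(\mathrm{Pic}(\mathcal{M}_{3,1})).
\]
It does this by showing that $E_\infty^{0,2}$ and $E_\infty^{1,1}$ of the $\mu_m$-Leray spectral sequence are generated by powers of the relative canonical bundle. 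The inputs are Hain--Reed's isomorphism $\mathrm{H}^2(\Gamma_g,\mathbb{H})\cong\mathbb{Z}/(2g-2)\mathbb{Z}\cdot\epsilon(1)$ together with $\mathrm{H}^1(\Gamma_g,\mathbb{H})=0$. This is where the order-$(2g-2)$ extension class you allude to genuinely enters. A minor point: the Birman exact sequence has $\Gamma_g^1$, not $\Gamma_{g,1}$, as its middle term.
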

This establishes a Franchetta-type result: For $g\geq3$, all torsors under the universal Jacobian arise from the ``obvious'' construction.

\subsection{Sketch of the proof}First, the strong Franchetta theorem \cite[Theorem 5.1]{zbMATH02078578} reduces the computation in Theorem \ref{Main_Theorem} to proving the vanishing $\mathrm{H}^1_{\textrm{\'et}}(\mathcal{M}_g,\mathrm{Pic}_{\mathcal{M}_{g,1}/\mathcal{M}_g})=0$. 

Then we show that for $g\geq3$, the pullback map $\pi^*\colon\mathrm{Br}'(\mathcal{M}_g)\to\mathrm{Br}'(\mathcal{M}_{g,1})$ is surjective. For $g\geq4$ this is known by \cite[Theorem 4.1]{zbMATH08105076}. For $g=3$ we perform an explicit analysis of the Leray spectral sequence, where we use a topological version of the Franchetta conjecture \cite[Theorem 24]{zbMATH01590471}, which builds upon results due to Morita. We leave details of the proof to the end.

Now the Leray spectral sequence for $\pi\colon\mathcal{M}_{g,1}\to\mathcal{M}_g$ further reduces the question to proving the injectivity of the pullback map $\pi^*\colon\mathrm{H}^3_{\textrm{\'et}}(\mathcal{M}_g,\mathbb{Q}_\ell/\mathbb{Z}_\ell)\to \mathrm{H}^3_{\textrm{\'et}}(\mathcal{M}_{g,1},\mathbb{Q}_\ell/\mathbb{Z}_\ell).$ 

Finally, via the comparison theorem and the Teichm\"uller uniformization, this injectivity follows from stability results \cite[Theorem 1.2]{zbMATH06492666} on the homology of mapping class groups of oriented surfaces.

\section{Preparation}
\subsection{The strong Franchetta theorem}For $g\geq3$, the moduli stack $\mathcal{M}_{g,n}$ of $n$-pointed smooth genus-$g$ curves has trivial generic stabilizer, hence the generic fiber of the forgetful morphism between coarse moduli spaces $\pi\colon \overline{M}_{g,n+1}\to\overline{M}_{g,n}$ is a smooth genus-$g$ curve.
\begin{proposition}{\cite[Theorem 5.1]{zbMATH02078578}}\label{strong-franchetta}
Let $k$ be a field, and $g\geq3$ and $n\geq0$ be integers. Let $\eta_n\in\overline{M}_{g,n}$ be the generic point in the moduli space of $n$-pointed stable curves of genus $g\geq3$, and $C=\overline{M}_{g,n+1}$ the tautological curve. Then the marked points $c_1,\cdots,c_n\in C_{\eta_n}$ and the canonical class $K_{C_{\eta_n}}$ freely generate the group of $\eta_n$- points of $\mathrm{Pic}_{C_{\eta_n}/\eta_n}$.
\end{proposition}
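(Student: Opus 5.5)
This proposition is the strong Franchetta theorem, which the paper cites from the literature; my plan reconstructs its standard proof. The strategy is to compute $\mathrm{Pic}(C_{\eta_n})$ by spreading divisors out over the open locus of smooth curves. The key input is the rational Picard group of $\mathcal{M}_{g,n+1}$, computed by Harer and Arbarello--Cornalba. For $g\geq3$, $\mathrm{Pic}(\mathcal{M}_{g,n+1})$ is free, generated by $\lambda$ and the cotangent classes $\psi_1,\dots,\psi_{n+1}$. The torsion-freeness part is where $g\geq3$ and Harer's vanishing $\mathrm{H}_1(\Gamma_{g,n+1},\mathbb{Z})=0$ enter.

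\textbf{Step 1: line bundles on the generic fiber.} Any line bundle $L$ on $C_{\eta_n}$ is the restriction of a line bundle on the smooth locus $\mathcal{M}_{g,n+1}$, the universal curve over $\mathcal{M}_{g,n}$. Indeed, take the closure of a divisor representing $L$; the total space is smooth as a stack, so this closure is Cartier. Hence restriction
\[
\mathrm{Pic}(\mathcal{M}_{g,n+1})\to \mathrm{Pic}(C_{\eta_n})
\]
is surjective. Its kernel contains $\pi^*\mathrm{Pic}(\mathcal{M}_{g,n})$, which is generated by $\lambda,\psi_1,\dots,\psi_n$ pulled back. Since $\psi_i$ on $\mathcal{M}_{g,n+1}$ differs from $\pi^*\psi_i$ by the section divisor $D_{i,n+1}$, the image is generated by the restrictions of $\psi_{n+1}$ and of the sections $D_{i,n+1}$. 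Here $\psi_{n+1}$ is the relative dualizing sheaf $\omega_\pi(\sum_i D_{i,n+1})$. The restrictions are $K_{C_{\eta_n}}$ (twisted by the points) and the points $c_i$. So $K$ and $c_1,\dots,c_n$ generate $\mathrm{Pic}(C_{\eta_n})$.

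\textbf{Step 2: passing to $\eta_n$-points of $\mathrm{Pic}_{C_{\eta_n}/\eta_n}$.} We must also handle elements of $\mathrm{Pic}_{C/\eta}(\eta)$ that do not come from line bundles. The obstruction lies in $\mathrm{Br}(\eta_n)$ via $\mathrm{Pic}(C)\to\mathrm{Pic}_{C/\eta}(\eta)\to\mathrm{Br}(\eta)$. When $n\geq1$ the curve has a rational point $c_1$, so the map is surjective. For $n=0$, I would descend from $n=1$: a rational point of $\mathrm{Pic}_{C_{\eta_0}}$ pulls back to a rational point over $\eta_1$. By the $n=1$ case it equals $aK+bc_1$. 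Galois/monodromy invariance under moving the point forces $b=0$, since $c_1$ varies and $aK+bc_1$ must be independent of it. So the point is $aK$, which is a genuine line bundle.

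\textbf{Step 3: freeness.} Suppose $aK+\sum b_i c_i=0$. Specializing to a curve with a general point configuration, independence of $K$ and the $c_i$ in $\mathrm{Pic}$ follows from degree and monodromy. The points $c_i-c_j$ are not torsion for a general curve and general points, because the Abel--Jacobi images of general points are non-torsion and independent. Comparing degrees then gives $a(2g-2)+\sum b_i=0$, which forces all coefficients to vanish.

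The main obstacle is Step 1's input, namely knowing $\mathrm{Pic}(\mathcal{M}_{g,n+1})$ integrally. This rests on Harer's $\mathrm{H}^2$ computation and Arbarello--Cornalba. A secondary subtlety is the stacky versus coarse distinction at $\eta_n$, which the trivial generic stabilizer for $g\geq3$ resolves.
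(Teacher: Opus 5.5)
Your argument is the classical characteristic-zero proof of strong Franchetta (in the spirit of Arbarello--Cornalba and Mestrano). The paper does not reprove the proposition; it cites Schr\"oer, and the statement is asserted over an \emph{arbitrary} field $k$. That generality is exactly what Schr\"oer's theorem adds, and your proof does not reach it. Everything you do rests on $\mathrm{Pic}(\mathcal{M}_{g,n+1})$ being free on $\lambda,\psi_1,\dots,\psi_{n+1}$. This is deduced from Harer's computation of $\mathrm{H}^2(\Gamma_{g,n+1},\mathbb{Z})$ and the vanishing of $\mathrm{H}_1(\Gamma_{g,n+1},\mathbb{Z})$, which are statements about mapping class groups, i.e.\ about $\mathcal{M}_{g,n+1}^{\mathrm{an}}$. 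They transfer to any field of characteristic $0$ by the Lefschetz principle and base change to $\bar k$. In characteristic $p$ they give you no integral information. There you would need a substitute for Step 1, plus separate proofs that the generic Jacobian has no nonzero rational torsion and that the subgroup generated by $K,c_1,\dots,c_n$ is saturated. The $p$-primary part is genuinely harder, since $[p]$ is inseparable. As written, you prove the proposition only for $\mathrm{char}\,k=0$. That is all the paper uses, since it works over $\mathbb{C}$, but it is not the statement as given.

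Even in characteristic $0$, two steps need repair. In Step 1 with $n\geq 1$, the stack $\mathcal{M}_{g,n+1}$ is not the universal curve over $\mathcal{M}_{g,n}$. It is the complement of the sections $D_{i,n+1}$, its generic fiber is $C_{\eta_n}\setminus\{c_1,\dots,c_n\}$, and on it $\psi_i=\pi^*\psi_i$. Work instead on the universal curve $\mathcal{C}_{g,n}\subset\overline{\mathcal{M}}_{g,n+1}$. Its Picard group is generated by $\lambda,\psi_1,\dots,\psi_{n+1}$ together with the $D_{i,n+1}$, which span the kernel of restriction to $\mathcal{M}_{g,n+1}$. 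With that change your computation of the image is correct.

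In Step 3, comparing degrees gives a single linear relation and cannot force all coefficients to vanish. Use the moving-point argument from your Step 2 instead. If $b_i\neq 0$, view $c_i$ as the generic point of the curve over the generic point with the $i$-th marking forgotten. The relation would then make $p\mapsto aK+\sum_{j\neq i}b_jc_j+b_ip$ constant on that curve. But $p\mapsto\mathcal{O}(b_ip)$ is nonconstant, since $[b_i]$ composed with Abel--Jacobi is nonconstant. Hence every $b_i=0$, and then degree gives $a=0$. Your Step 2 is fine as it stands.
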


\subsection{The Teichm\"uller Uniformization}\label{Teich}
Let $\mathcal{M}_{g,n}^{\mathrm{an}}$ be the analytification of the Deligne--Mumford stack $\mathcal{M}_{g,n}$. The Teichm\"uller uniformization gives a presentation $$\mathcal{M}^{\mathrm{an}}_{g,n}\cong[\mathcal{T}_{g,n}/\Gamma_g^n],$$
where $\mathcal{T}_{g,n}$ is homeomorphic to $\mathbb{C}^{3g-3+n}$, see \cite[Chapter XV]{zbMATH05798333}.

As a consequence, the Hochschild-Serre spectral sequence for singular cohomology $$\mathrm{H}^p(\Gamma_g^n,\mathrm{H}^q(\mathcal{T}_{g,n},\mathbb{Q}_\ell/\mathbb{Z}_\ell))\Rightarrow \mathrm{H}^{p+q}(\mathcal{M}_{g,n}^{\mathrm{an}},\mathbb{Q}_\ell/\mathbb{Z}_\ell)$$
degenerates to one row, giving natural isomorphisms $\mathrm{H}^p(\mathcal{M}_{g,n}^{\mathrm{an}},\mathbb{Q}_\ell/\mathbb{Z}_\ell)\cong\mathrm{H}^p(\Gamma_g^n,\mathbb{Q}_\ell/\mathbb{Z}_\ell).$

\subsection{The Brauer group of $\mathcal{M}_{g,n}$}\label{2.3}Since $\mathcal{M}_{g,n}$ are regular and noetherian Deligne--Mumford stacks, the \'etale cohomology groups $\mathrm{H}_{\textrm{\'et}}^2(\mathcal{M}_{g,n},\mathbb{G}_m)$ are torsion \cite[Theorem 2.5(iii)]{zbMATH07275229}, hence the cohomological Brauer groups $\mathrm{Br}'(\mathcal{M}_{g,n}):=\mathrm{H}^2_{\textrm{\'et}}(\mathcal{M}_{g,n},\mathbb{G}_m)_{\textrm{tors}}$ coincide with the \'etale cohomology groups
$\mathrm{H}^2_{\textrm{\'et}}(\mathcal{M}_{g,n},\mathbb{G}_m).$
\begin{proposition}\label{vanishing-brauer}
Let $g\geq3$ and $n\geq0$ be integers, then $$\mathrm{Br}'(\mathcal{M}_{g,n})=\left\{\begin{array}{cl}0&g\geq4,n\geq0\\ \mathbb{Z}/2\mathbb{Z}&g=3,n=0,1\end{array}\right.$$
Moreover, the pullback map $\pi^*\colon\mathrm{Br}'(\mathcal{M}_3)\to\mathrm{Br}'(\mathcal{M}_{3,1})$ is an isomorphism.
\end{proposition}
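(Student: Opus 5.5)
The plan is to compute $\mathrm{Br}'(\mathcal{M}_{g,n})$ through the Kummer sequence together with the comparison theorem and the Teichm\"uller uniformization of \S\ref{Teich}. First, by \S\ref{2.3}, $\mathrm{Br}'(\mathcal{M}_{g,n})=\mathrm{H}^2_{\textrm{\'et}}(\mathcal{M}_{g,n},\mathbb{G}_m)$ is torsion. The Kummer sequences for all $m$ give
$$0\to\mathrm{Pic}(\mathcal{M}_{g,n})\otimes\mathbb{Q}/\mathbb{Z}\to\mathrm{H}^2_{\textrm{\'et}}(\mathcal{M}_{g,n},\mathbb{Q}/\mathbb{Z}(1))\to\mathrm{Br}'(\mathcal{M}_{g,n})\to0.$$
By comparison and \S\ref{Teich}, the middle term is $\mathrm{H}^2(\Gamma_g^n,\mathbb{Q}/\mathbb{Z})$. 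By universal coefficients this equals $\mathrm{Hom}(\mathrm{H}_2(\Gamma_g^n),\mathbb{Q}/\mathbb{Z})\oplus\mathrm{Ext}(\mathrm{H}_1(\Gamma_g^n),\mathbb{Q}/\mathbb{Z})$, and the Ext term vanishes because $\mathbb{Q}/\mathbb{Z}$ is injective. We also need $\mathrm{Pic}(\mathcal{M}_{g,n})$. For $g\ge3$, Arbarello--Cornalba/Harer give $\mathrm{Pic}(\mathcal{M}_{g,n})\cong\mathrm{H}^2(\Gamma_g^n,\mathbb{Z})$ via $c_1$, using $\mathrm{H}^1(\Gamma_g^n,\mathbb{Z})=0$ and the vanishing of holomorphic data. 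Hence
$$\mathrm{Br}'\cong\operatorname{coker}\bigl(\mathrm{H}^2(\Gamma,\mathbb{Z})\otimes\mathbb{Q}/\mathbb{Z}\to\mathrm{H}^2(\Gamma,\mathbb{Q}/\mathbb{Z})\bigr)\cong\mathrm{H}^3(\Gamma_g^n,\mathbb{Z})_{\mathrm{tors}}\cong\mathrm{Ext}(\mathrm{H}_2(\Gamma_g^n,\mathbb{Z}),\mathbb{Z})\cong\mathrm{H}_2(\Gamma_g^n,\mathbb{Z})_{\mathrm{tors}}^\vee.$$
For $g\ge4$, Harer's computation (refined by Korkmaz--Stipsicz) gives $\mathrm{H}_2(\Gamma_g^n,\mathbb{Z})\cong\mathbb{Z}^{n+1}$, which is torsion free, so $\mathrm{Br}'=0$. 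This agrees with the cited \cite[Theorem 4.1]{zbMATH08105076}.

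For $g=3$ the known result is $\mathrm{H}_2(\Gamma_3,\mathbb{Z})\cong\mathbb{Z}\oplus\mathbb{Z}/2$ (Sakasai, or Korkmaz--Stipsicz), so $\mathrm{Br}'(\mathcal{M}_3)=\mathbb{Z}/2$. The substantive part is $n=1$, where \cite{zbMATH01991000} only bounds the torsion of $\mathrm{H}_2(\Gamma_{3,1})$. Here I would run the Leray spectral sequence for $\pi\colon\mathcal{M}_{3,1}\to\mathcal{M}_3$ with $\mathbb{G}_m$ coefficients. Since $R^0\pi_*\mathbb{G}_m=\mathbb{G}_m$, $R^1\pi_*\mathbb{G}_m=\mathrm{Pic}_{\mathcal{M}_{3,1}/\mathcal{M}_3}$ and $R^2\pi_*\mathbb{G}_m=0$ (relative curves have trivial Brauer group fiberwise), we get a filtration on $\mathrm{Br}'(\mathcal{M}_{3,1})$. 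The graded pieces are
\begin{itemize}
\item a quotient of $\mathrm{Br}'(\mathcal{M}_3)$, with kernel coming from $\mathrm{H}^0(\mathcal{M}_3,\mathrm{Pic})$ via $d_2$;
\item $\ker\bigl(\mathrm{H}^1(\mathcal{M}_3,\mathrm{Pic}_{\mathcal{M}_{3,1}/\mathcal{M}_3})\to\mathrm{H}^3(\mathcal{M}_3,\mathbb{G}_m)\bigr)$.
\end{itemize}
The argument then has two steps.

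\emph{Injectivity of $\pi^*$.} The section-type behaviour is absent, but $\mathrm{H}^0(\mathcal{M}_3,\mathrm{Pic})$ is generated by $K$, by Proposition \ref{strong-franchetta} with $n=0$. Since $K$ is represented by the relative dualizing sheaf, a genuine line bundle on $\mathcal{M}_{3,1}$, its $d_2$ vanishes. So $\mathrm{Br}'(\mathcal{M}_3)\to\mathrm{Br}'(\mathcal{M}_{3,1})$ is injective.

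\emph{Surjectivity of $\pi^*$.} This requires that the relevant part of $\mathrm{H}^1(\mathcal{M}_3,\mathrm{Pic})$ contributes nothing to $\mathrm{Br}'$. This is the main obstacle. My plan is to avoid it by instead computing $\mathrm{H}_2(\Gamma_3^1,\mathbb{Z})_{\mathrm{tors}}$ topologically. Use the Lyndon--Hochschild--Serre spectral sequence for $1\to\pi_1(\Sigma_3)\to\Gamma_3^1\to\Gamma_3\to1$. On the $E^2$ page we have $\mathrm{H}_0(\Gamma_3,\mathrm{H}_2(\Sigma))=\mathbb{Z}$ and $\mathrm{H}_2(\Gamma_3,\mathbb{Z})=\mathbb{Z}\oplus\mathbb{Z}/2$. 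The remaining term is $\mathrm{H}_1(\Gamma_3,\mathrm{H}_1(\Sigma;\mathbb{Z}))$, which I would control using Morita's results, i.e.\ the topological Franchetta theorem \cite[Theorem 24]{zbMATH01590471}. Morita shows $\mathrm{H}^1(\Gamma_g,\mathrm{H}^1(\Sigma))$ is small, generated by the class related to $K$ (it is $0$ or $\mathbb{Z}$ depending on conventions); dually, $\mathrm{H}_1(\Gamma_3,\mathrm{H}_1)$ has no torsion that survives. The section-free Gysin/transfer argument then shows that the $\mathbb{Z}/2$ in $\mathrm{H}_2(\Gamma_3)$ maps isomorphically. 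Note that $\mathrm{H}_2(\Gamma_3^1)\to\mathrm{H}_2(\Gamma_3)$ is surjective and the torsion lifts because the $d^2$ differential lands in $\mathrm{H}_0(\mathrm{H}_1(\Sigma))=0$. This gives $\mathrm{H}_2(\Gamma_{3,1},\mathbb{Z})_{\mathrm{tors}}=\mathbb{Z}/2$.

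By the displayed formula, this yields $\mathrm{Br}'(\mathcal{M}_{3,1})=\mathbb{Z}/2$. Combined with injectivity of $\pi^*$ between groups both isomorphic to $\mathbb{Z}/2$, $\pi^*$ is an isomorphism. The delicate point I expect is the precise torsion in $\mathrm{H}_1(\Gamma_3,\mathrm{H}_1(\Sigma;\mathbb{Z}))$ and whether the differential $d^2\colon E^2_{2,1}\to E^2_{0,2}$ can create torsion. I would handle this by comparing with $\mathbb{Z}/2$-coefficients and the Euler-class description of $d^2$, which is multiplication by $2-2g$ on the fundamental class.
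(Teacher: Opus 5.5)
Your reduction $\mathrm{Br}'(\mathcal{M}_{g,n})\cong(\mathrm{H}_2(\Gamma_g^n,\mathbb{Z})_{\mathrm{tors}})^\vee$ is fine. So are the cases $g\geq4$ and $(g,n)=(3,0)$; for the latter Sakasai is needed, since Korkmaz--Stipsicz alone leave the torsion undetermined. Your injectivity argument is also correct: strong Franchetta plus separatedness of the relative Picard space gives $\mathrm{H}^0(\mathcal{M}_3,\mathrm{Pic}_{\mathcal{M}_{3,1}/\mathcal{M}_3})=\mathbb{Z}K$, and $K$ is realized by $\omega_\pi$, so $d_2^{0,1}=0$. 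This is a self-contained substitute for the paper's citation of Di Lorenzo.

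The gap is in the surjectivity step. In the Lyndon--Hochschild--Serre spectral sequence of $1\to\pi_1(\Sigma_3)\to\Gamma_3^1\to\Gamma_3\to1$, the vanishing of $\mathrm{H}_0(\Gamma_3,\mathbb{H})$ only gives $E^\infty_{2,0}=\mathrm{H}_2(\Gamma_3)$, i.e.\ that $\mathrm{H}_2(\Gamma_3^1)\to\mathrm{H}_2(\Gamma_3)$ is onto. It does not show that the class of order $2$ lifts to an element of order $2$.

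Your claim that $\mathrm{H}_1(\Gamma_3,\mathbb{H})$ has no surviving torsion is also wrong. Since $\mathbb{H}$ is self-dual, universal coefficients together with $\mathrm{H}^1(\Gamma_3,\mathbb{H})=0$ and $\mathrm{H}^2(\Gamma_3,\mathbb{H})\cong\mathbb{Z}/4$ (Lemma \ref{Hain-lemma}) give $\mathrm{H}_1(\Gamma_3,\mathbb{H})\cong\mathbb{Z}/4$, which is entirely torsion. Whether it creates torsion in $\mathrm{H}_2(\Gamma_3^1)$ is not a question about differentials. (In any case $d^2\colon E^2_{2,1}\to E^2_{0,2}\cong\mathbb{Z}$ is zero, because its source is finite.) It is an extension problem: $F_1=\ker(\mathrm{H}_2(\Gamma_3^1)\to\mathrm{H}_2(\Gamma_3))$ is an extension of $E^\infty_{1,1}$ by $E^\infty_{0,2}=\mathbb{Z}[\Sigma]$, and you need it to be torsion free. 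Given the final result, one has $E^\infty_{1,1}\cong\mathbb{Z}/4$ and $F_1\cong\mathbb{Z}$, with $[\Sigma]$ equal to $\pm4$ times a generator, matching $\langle\psi,[\Sigma]\rangle=2g-2$. Neither the ``Gysin/transfer'' remark nor the Euler-class remark addresses this.

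The paper's Proposition \ref{add} supplies exactly this extension information, in cohomological form. It shows that $E_\infty^{0,2}$ and $E_\infty^{1,1}$ of the $\mu_m$-Leray spectral sequence are spanned by Chern classes of powers of the relative canonical bundle. It does so by a \v{C}ech comparison of the Bockstein of $c_1(\mathcal{L}^{\otimes m/(m,2g-2)})$ with the torsor class of $\mathrm{Pic}^{(2g-2)/(m,2g-2)}$, combined with Hain--Reed.

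Alternatively, you can close the gap with ingredients you already named and drop the spectral sequence for $\Gamma_3^1$:
\begin{itemize}
\item The Gysin sequence of the central extension $1\to\mathbb{Z}\to\Gamma_{3,1}\to\Gamma_3^1\to1$, together with perfectness, gives $\mathrm{H}_2(\Gamma_3^1)\cong\mathrm{H}_2(\Gamma_{3,1})\oplus\mathbb{Z}$.
\item The Korkmaz--Stipsicz bound you set aside bounds this torsion by $\mathbb{Z}/2$.
\item Your formula then gives $|\mathrm{Br}'(\mathcal{M}_{3,1})|\leq 2$, so your injective $\pi^*$ out of $\mathrm{Br}'(\mathcal{M}_3)\cong\mathbb{Z}/2$ is an isomorphism.
\end{itemize}
Dually, your injectivity is exactly the statement that the $2$-torsion class lifts to torsion, and the bound then forces $F_1$ to be torsion free. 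This route is shorter than the paper's, but it depends on the Korkmaz--Stipsicz bound. The paper's proof is independent of that bound and recovers their torsion subgroup as a corollary.
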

\begin{proof}For $g\geq4$, we refer to \cite[Theorem 4.1]{zbMATH07456405}. For $g=3$, by \cite[Theorem 1.1]{zbMATH08105076} we know that $\mathrm{Br}'(\mathcal{M}_3)\cong\mathbb{Z}/2\mathbb{Z}$, and by \cite[Corollary 3.22]{arXiv:2509.09661}, we know that  $\pi^*\colon \mathrm{Br}'(\mathcal{M}_3)\to \mathrm{Br}'(\mathcal{M}_{3,n})$ is injective. We aim to show that $\pi^*\colon\mathrm{Br}'(\mathcal{M}_{3})\to\mathrm{Br}'(\mathcal{M}_{3,1})$ is surjective. Since both sides are torsion, it suffices to show surjectivity on the $m$-torsion parts for all $m>0$.
Comparison of Kummer sequence gives us the commutative diagram
$$\xymatrix{0\ar[r]&\mathrm{Pic}(\mathcal{M}_{3})\otimes\mathbb{Z}/m\mathbb{Z}\ar[r]^-{\delta}\ar[d]^{\pi^*}&\mathrm{H}_{\textrm{\'et}}^2(\mathcal{M}_{3},\mu_m)\ar[r]\ar[d]^{\pi^*}&\mathrm{Br}'(\mathcal{M}_{3})[m]\ar[d]^{\pi^*}\ar[r]&0\\
0\ar[r]&\mathrm{Pic}(\mathcal{M}_{3,1})\otimes\mathbb{Z}/m\mathbb{Z}\ar[r]^-{\delta}&\mathrm{H}^2_{\textrm{\'et}}(\mathcal{M}_{3,1},\mu_m)\ar[r]&\mathrm{Br}'(\mathcal{M}_{3,1})[m]\ar[r]&0}$$
By the snake lemma, it suffices to show that $$\mathrm{H}_{\textrm{\'et}}^2(\mathcal{M}_{3,1},\mu_m)=\pi^*\mathrm{H}_{\textrm{\'et}}^2(\mathcal{M}_{3},\mu_m)+\delta\left(\mathrm{Pic}(\mathcal{M}_{3,1})\right).$$
The Leray spectral sequence for $\mu_m$ along $\pi\colon\mathcal{M}_{3,1}\to\mathcal{M}_{3}$ induces a filtration $F^0\supseteq F^1\supseteq F^2$, where $F^0=\mathrm{H}^2_{\textrm{\'et}}(\mathcal{M}_{3,1},\mu_m)$ and $F^2=\pi^*\mathrm{H}^2_{\textrm{\'et}}(\mathcal{M}_{3},\mu_m).$ It suffices to show that any class in $\mathrm{H}^2_{\textrm{\'et}}(\mathcal{M}_{3,1},\mu_m)$ can be modified by $\delta(\mathrm{Pic}(\mathcal{M}_{3,1}))$ so that the image in $F^0/F^1$ and $F^1/F^2$ both equal to zero. We leave the  details to Proposition \ref{add} in Section \ref{finish-proof}.
\end{proof}

\subsection{Homology stability theorem} 
Let $S_{g}$ be a surface of genus $g$, and let $x\in S_g$ be a fixed point. Let $S_g^1=S_g-\{x\}$ be the punctured surface. Let $S_{g,1}$ be the surface with boundary, obtained from $S_g$ by removing an open disc around $x$. We have natural inclusions 
$$S_{g,1}\subseteq S_g^1\subseteq S_g.$$

Let $\Gamma_g=\pi_0\mathrm{Diff}^+(S_g)$ be the group of components of the orientation preserving diffeomorphism group of $S_g$. Let $\Gamma_{g,1}=\pi_0\mathrm{Diff}^+(S_{g,1};\partial S_{g,1})$ and $\Gamma_{g}^1=\pi_0\mathrm{Diff}^+(S_g;\{x\})$ be the group of components of diffeomorphisms that are identity on the boundary of $S_{g,1}$ and on the point $\{x\}$, respectively. 

Let $\delta_g\colon\Gamma_{g,1}\to\Gamma_{g}$ be the map induced by the inclusion $S_{g,1}\hookrightarrow S_g$. Then we have:
\begin{proposition}{\cite[Theorem 1.2]{zbMATH06492666}}
The map $\mathrm{H}_k(\delta_g)\colon\mathrm{H}_k(\Gamma_{g,1},\mathbb{Z})\to\mathrm{H}_k(\Gamma_{g},\mathbb{Z})$ is surjective for $k\leq \frac{2}{3}g+1$ and an isomorphism for $k\leq\frac{2}{3}g$.
\end{proposition}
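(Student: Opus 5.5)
Since this result is quoted from \cite[Theorem 1.2]{zbMATH06492666}, the proposal is to reconstruct the standard resolution argument from homological stability. The method is to let the groups act on highly connected simplicial complexes, compare the resulting spectral sequences, and induct on $g$. The slope $\frac{2}{3}$ is taken as input from the known stability range for the boundary-preserving maps $\Gamma_{h,b}\to\Gamma_{h+1,b}$ and $\Gamma_{h,b}\to\Gamma_{h,b+1}$ (Harer, improved by Boldsen and Randal-Williams). The capping statement is then deduced from these. I do not intend to reprove the connectivity estimates for the complexes.

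First, factor $\delta_g$ as $\Gamma_{g,1}\to\Gamma_g^1\to\Gamma_g$.
\begin{itemize}
\item The first map is a central extension with kernel $\mathbb{Z}$, generated by the boundary Dehn twist.
\item The second map is the Birman exact sequence $1\to\pi_1(S_g)\to\Gamma_g^1\to\Gamma_g\to1$.
\end{itemize}
The Gysin and Lyndon--Hochschild--Serre sequences for these two extensions contain Euler-class and Birman terms. These are hard to control directly. So instead of analysing the extensions I would compare actions on curve complexes.

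Let $C(S)$ be the semisimplicial complex of ordered tuples of disjoint nonseparating simple closed curves whose union does not disconnect the surface. By Harer, $C(S_g)$ and $C(S_{g,1})$ are roughly $(g-2)$-connected. The actions of $\Gamma_g$ and $\Gamma_{g,1}$ are transitive on $p$-simplices. The stabilizer of a $p$-simplex is, up to the central twists along the curves, a mapping class group of genus $g-p-1$ with $2(p+1)$ extra boundary components, or with one more boundary component in the $\Gamma_{g,1}$ case. This gives spectral sequences
$$E^1_{p,q}=\mathrm{H}_q(\mathrm{Stab}_p;\mathbb{Z}_{\mathrm{or}})\Rightarrow \mathrm{H}_{p+q}(\Gamma),$$
valid in total degree up to the connectivity. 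Since $\delta_g$ is induced by $S_{g,1}\hookrightarrow S_g$, it is compatible with the actions and yields a map of spectral sequences. On $E^1_{p,q}$ this map is again a boundary-capping map, now for a surface of smaller genus $g-p-1$ and more boundary components.

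By induction on genus, these capping maps are isomorphisms for $q\le\frac{2}{3}(g-p-1)$ and surjections one degree higher. To run the induction, the statement must be proved in the stronger form that allows extra boundary components; this is where the known stability for the boundary-preserving maps enters. The comparison theorem for spectral sequences then gives that $\mathrm{H}_k(\delta_g)$ is an isomorphism for $k\le\frac{2}{3}g$ and a surjection for $k\le\frac{2}{3}g+1$. The point is that $p+\frac{2}{3}(g-p-1)$ grows with $p$, while the connectivity of the complex, about $g-2$, bounds the usable total degree from above. The numerics must match at $p=0$, where the map is a capping map in genus $g-1$ with two boundary circles.

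The hard part is the $d^1$ differential out of the $p=0$ column and the argument near the edge of the range. There one needs the two face maps $E^1_{1,q}\to E^1_{0,q}$ to differ by conjugation by an element of $\Gamma$, so that $d^1$ is either zero or the stabilization map. One also needs the twisted coefficients $\mathbb{Z}_{\mathrm{or}}$ to vanish or cancel. If this bookkeeping fails to reach slope $\frac{2}{3}$, I would fall back on the arc-complex version used by Randal-Williams, where stabilizers are literally smaller surfaces with boundary and no central twists appear.
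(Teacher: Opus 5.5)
The paper gives no argument for this statement; it imports it from Wahl's survey, so your sketch has to stand on its own. Its overall shape is the standard one: compare the curve-complex spectral sequences of $\Gamma_{g,1}$ and $\Gamma_g$, and use capping maps on $E^1$. The decisive step, however, does not go through. You claim the comparison theorem turns your $E^1$-input into an isomorphism for $k\le\frac{2}{3}g$ and a surjection for $k\le\frac{2}{3}g+1$, but that input does not support this.

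In column $p$ the comparison map caps one boundary circle of a genus-$(g-p-1)$ surface that keeps other boundary circles. Such a map is a retraction of a pants-gluing map $\beta$. It is therefore onto in every degree and injective exactly where $\beta$ is onto, i.e.\ for $q\le\frac{2}{3}(g-p-1)$; this is all your ``induction'' can give. In total degree $k=p+q$, injectivity thus holds only for $k\le\frac{2}{3}g+\frac{p-2}{3}$, so column $p=0$ is binding. The comparison then yields an isomorphism on abutments only for $k\le\frac{2}{3}g-\frac{2}{3}$, and a surjection only for $k\le\frac{2}{3}g+\frac{1}{3}$. This loses a full degree whenever $g\not\equiv1\pmod{3}$; for $g=3$ it gives surjectivity only for $k\le 2$. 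Your remark that ``the numerics must match at $p=0$'' marks exactly where they fail. The $d^1$-analysis you postpone is not bookkeeping; it is the missing argument.

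Independently of that, this resolution cannot reach the low-genus part of the stated range. With the connectivity you quote ($g-2$), the equivariant homology of the curve complex is only guaranteed to surject onto (absolute or relative) group homology in degrees $k\le g-1$. So the spectral sequence gives no information on $\mathrm{H}_k(\Gamma_g,\Gamma_{g,1})$ for $k\ge g$, yet for $g\le 3$ the claimed range contains $k=g$. In particular, the case $g=3$, $k=3$ is out of reach on both counts. This is the one instance where the paper uses the very edge of the range, in Corollary~\ref{Wahl-corollary} for $\mathcal{M}_3$.

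Your fallback is not available either. Arc complexes need a boundary circle or marked point for the arcs to end on, so $\Gamma_g$ does not act on them. This is exactly why closing the last boundary component is not a formal consequence of the stability theorems for surfaces with boundary. Also, in $\Gamma_g$ the stabilizer of a nonseparating curve contains classes exchanging its two sides. The $E^1$-terms are therefore not just mapping class groups of cut surfaces modulo central twists, and this too needs an argument.

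As it stands, the proposal does not establish the statement in the claimed range. The paper sidesteps the issue entirely by citing the result.
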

Let $\epsilon_g\colon\Gamma_g^1\to\Gamma_g$ be the map induced by the inclusion $S_g^1\subseteq S_g$.  Since $\mathrm{H}_k(\delta_g)$ factors through $\mathrm{H}_k(\epsilon_g)\colon \mathrm{H}_k(\Gamma_{g}^1,\mathbb{Z})\to\mathrm{H}_k(\Gamma_g,\mathbb{Z})$, the surjectivity of $\mathrm{H}_k(\delta_g)$ implies the surjectivity of $\mathrm{H}_k(\epsilon_g)$. Therefore we have:
\begin{corollary}\label{Wahl-corollary}
The map $\mathrm{H}_k(\epsilon_g)\colon \mathrm{H}_k(\Gamma_g^1,\mathbb{Z})\to\mathrm{H}_k(\Gamma_{g},\mathbb{Z})$ is surjective for $k\leq\frac{2}{3}g+1$.
\end{corollary}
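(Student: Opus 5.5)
The corollary is essentially a formal consequence of the cited stability theorem, so the plan is to make precise the factorization asserted just before the statement and then argue with images. I will construct a homomorphism $\iota_g\colon\Gamma_{g,1}\to\Gamma_g^1$ with $\epsilon_g\circ\iota_g=\delta_g$. Take a diffeomorphism of $S_{g,1}$ that is the identity on $\partial S_{g,1}$. Extend it by the identity over the removed open disc around $x$. The result is a diffeomorphism of $S_g$ fixing $x$; smoothness across the boundary can be arranged by first isotoping the diffeomorphism to be the identity on a collar. Isotopies rel boundary extend in the same way to isotopies fixing $x$, so $\iota_g$ is well defined on isotopy classes, and it is a homomorphism. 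Forgetting that $x$ is fixed then gives exactly the element $\delta_g$ assigns, since both are induced by the inclusion $S_{g,1}\hookrightarrow S_g$ and extension by the identity. Hence $\delta_g=\epsilon_g\circ\iota_g$.

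By functoriality of group homology we obtain $\mathrm{H}_k(\delta_g)=\mathrm{H}_k(\epsilon_g)\circ\mathrm{H}_k(\iota_g)$. This gives the inclusion of images $\operatorname{im}\mathrm{H}_k(\delta_g)\subseteq\operatorname{im}\mathrm{H}_k(\epsilon_g)\subseteq\mathrm{H}_k(\Gamma_g,\mathbb{Z})$. For $k\leq\frac{2}{3}g+1$, the stability theorem \cite[Theorem 1.2]{zbMATH06492666} says the leftmost image is all of $\mathrm{H}_k(\Gamma_g,\mathbb{Z})$, so $\mathrm{H}_k(\epsilon_g)$ is surjective in the same range.

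The only point needing any care, and the closest thing to an obstacle, is the well-definedness of $\iota_g$ at the level of $\pi_0$. Concretely, one must check that the extension-by-identity construction respects the isotopy relations used to define $\Gamma_{g,1}$ and $\Gamma_g^1$; the collar argument handles this. No homological computation is required beyond functoriality.
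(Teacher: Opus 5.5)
Your proposal is correct and is essentially the paper's argument. The paper likewise factors $\mathrm{H}_k(\delta_g)$ through $\mathrm{H}_k(\epsilon_g)$ and deduces surjectivity of $\mathrm{H}_k(\epsilon_g)$ from the cited stability theorem. The difference is that you make explicit the capping homomorphism $\iota_g\colon\Gamma_{g,1}\to\Gamma_g^1$ (extending by the identity over the disc around $x$, with the collar argument for well-definedness), which the paper leaves implicit.
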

Taking \(k=3\), we see that
$
\mathrm{H}_3(\epsilon_g)\colon
\mathrm{H}_3(\Gamma_g^1,\mathbb Z)
\twoheadrightarrow
\mathrm{H}_3(\Gamma_g,\mathbb Z)
$ for all $g\geq3$.

\section{Proof of Theorem \ref{Main_Theorem}}

\subsection{Reduction to vanishing}
Consider the short exact sequence of \'etale sheaves on $\mathcal{M}_{g}$
$$\xymatrix{0\ar[r]&\mathrm{Pic}^0_{\mathcal{M}_{g,1}/\mathcal{M}_{g}}\ar[r]&\mathrm{Pic}_{\mathcal{M}_{g,1}/\mathcal{M}_{g}}\ar[r]&\underline{\mathbb{Z}}_{\mathcal{M}_{g}}\ar[r]&0},$$from which we obtain the long exact sequence $$\xymatrix{\mathrm{H}^0(\mathcal{M}_{g},\underline{\mathbb{Z}}_{\mathcal{M}_{g}})\ar[r]^-{\delta}&\mathrm{H}^1_{\textrm{\'et}}(\mathcal{M}_{g},\mathrm{Pic}^0_{\mathcal{M}_{g,1}/\mathcal{M}_{g}})\ar[r]&\mathrm{H}^1_{\textrm{\'et}}(\mathcal{M}_{g},\mathrm{Pic}_{\mathcal{M}_{g,1}/\mathcal{M}_{g}})\ar[r]&\mathrm{H}^1_{\textrm{\'et}}(\mathcal{M}_{g},\underline{\mathbb{Z}}_{\mathcal{M}_{g}})=0.}$$
Here the vanishing of $\mathrm{H}^1_{\textrm{\'et}}(\mathcal{M}_{g},\underline{\mathbb{Z}}_{\mathcal{M}_{g}})$ is due to \cite[Proposition 2.4.2]{zbMATH07384449}. Keeping track of the representing cocycles, the connecting homomorphism $\delta$ sends the element $1\in\mathrm{H}^0(\mathcal{M}_{g},\underline{\mathbb{Z}}_{\mathcal{M}_{g}})$ to the class of the torsor $[\mathrm{Pic}^1_{\mathcal{M}_{g,1}/\mathcal{M}_{g}}]$. 

By Proposition \ref{strong-franchetta}, there are rational points on $\mathrm{Pic}^d_{\mathcal{M}_{g,1}/\mathcal{M}_{g}}$ if and only if $d$ is a multiple of $2g-2$, hence the period of $[\mathrm{Pic}^1_{\mathcal{M}_{g,1}/\mathcal{M}_{g}}]$ equals $2g-2$. 
Therefore, showing that $$\mathrm{H}^1_{\textrm{\'et}}(\mathcal{M}_{g},\mathrm{Pic}^0_{\mathcal{M}_{g,1}/\mathcal{M}_{g}})\cong\mathbb{Z}/(2g-2)\mathbb{Z}\cdot[\mathrm{Pic}^1_{\mathcal{M}_{g,1}/\mathcal{M}_{g}}],$$ is equivalent to showing $$\mathrm{H}_{\textrm{\'et}}^1(\mathcal{M}_{g},\mathrm{Pic}_{\mathcal{M}_{g,1}/\mathcal{M}_{g}})=0.$$

\subsection{Reduction to injectivity}\label{sequence}
We consider the Leray spectral sequence of $\mathbb{G}_m$ along the projection $\pi\colon \mathcal{M}_{g,1}\to \mathcal{M}_{g}$. Since $\pi$ is a family of smooth curves over a smooth base, we have the following lemma:
\begin{lemma}\label{higher-vanishing}We have $\mathrm{R}^i\pi_*\mathbb{G}_m=0$ for all $i\geq2$.
\end{lemma}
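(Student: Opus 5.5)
The vanishing is étale-local on the base, so I would compute stalks of $\mathrm{R}^i\pi_*\mathbb{G}_m$ at geometric points. Choose an étale atlas $U\to\mathcal{M}_g$ by a scheme. Because $\mathcal{M}_{g,1}$ is the universal curve, its pullback $C_U:=\mathcal{M}_{g,1}\times_{\mathcal{M}_g}U\to U$ is a smooth proper family of genus-$g$ curves over a scheme. Formation of $\mathrm{R}^i\pi_*$ commutes with étale base change, so it suffices to prove $\mathrm{R}^i f_*\mathbb{G}_m=0$ for $i\ge 2$, where $f\colon C\to S$ is a smooth proper curve over a regular (smooth over $\mathbb{C}$) scheme $S$. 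The stalk at a geometric point $\bar s$ is $\mathrm{H}^i_{\textrm{\'et}}(C\times_S \mathrm{Spec}\,\mathcal{O}^{\mathrm{sh}}_{S,\bar s},\mathbb{G}_m)$. Write $X$ for this relative curve over the strictly henselian regular local ring $A=\mathcal{O}^{\mathrm{sh}}_{S,\bar s}$.

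First I treat torsion coefficients. For $m\ge1$ the Kummer sequence is exact on $X$ since we are over $\mathbb{C}$. By proper base change, $\mathrm{H}^i(X,\mu_m)\cong \mathrm{H}^i(X_{\bar s},\mu_m)$, and this vanishes for $i\ge3$ because $X_{\bar s}$ is a curve over an algebraically closed field. For $i=2$ we have $\mathrm{H}^2(X_{\bar s},\mu_m)\cong\mathbb{Z}/m$, generated by the class of a point. Because $A$ is strictly henselian and $f$ is smooth, a point of $X_{\bar s}$ lifts to a section of $X\to\mathrm{Spec}\,A$. The line bundle of that section restricts to a degree-one bundle on the special fibre, so $\mathrm{Pic}(X)\to \mathrm{H}^2(X,\mu_m)$ is surjective. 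The Kummer sequence then shows that $\mathrm{H}^2(X,\mathbb{G}_m)[m]=0$, and, using $\mathrm{H}^3(X,\mu_m)=0$, that $\mathrm{H}^i(X,\mathbb{G}_m)[m]=0$ for $i\geq 3$ as well.

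Next I need the groups $\mathrm{H}^i(X,\mathbb{G}_m)$ for $i\ge2$ to be torsion. Since $X$ is regular and noetherian, $\mathrm{H}^2(X,\mathbb{G}_m)$ is torsion, by the same reference \cite[Theorem 2.5(iii)]{zbMATH07275229} invoked in \S\ref{2.3}. Combined with the previous step, this gives $\mathrm{H}^2=0$. For $i\geq3$ I would use the standard argument on a regular integral scheme. The sequence $0\to\mathbb{G}_m\to j_*\mathbb{G}_{m,K}\to\bigoplus_{x\in X^{(1)}} i_{x*}\mathbb{Z}\to0$ is exact because $X$ is regular, so Weil and Cartier divisors agree. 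The higher cohomology of $\bigoplus_x i_{x*}\mathbb{Z}$ is torsion, since Galois cohomology of $\mathbb{Z}$ in positive degree is torsion. The higher cohomology of $j_*\mathbb{G}_{m,K}$ is controlled by Galois cohomology of the function field via the Leray spectral sequence, and is torsion by Hilbert 90 and the fact that Galois cohomology in positive degree is torsion. It follows that $\mathrm{H}^i(X,\mathbb{G}_m)$ is torsion for $i\ge2$, hence zero by the Kummer step.

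The main obstacle is this torsion claim in degrees $i\geq3$, where one must check that $\mathrm{R}^q j_*\mathbb{G}_{m,K}$ is torsion for $q\ge1$. A simpler alternative avoids it. Taking the Kummer sequence on the étale site of $\mathcal{M}_g$ together with proper base change gives $\mathrm{R}^i\pi_*\mu_m=0$ for $i\ge3$ and $\mathrm{R}^2\pi_*\mu_m\cong\mathbb{Z}/m$, surjected onto by $\mathrm{R}^1\pi_*\mathbb{G}_m$ through the degree map. One then deduces that $\mathrm{R}^i\pi_*\mathbb{G}_m$ has no torsion for $i\ge2$. This must be combined with torsion-ness of stalks, so the same point recurs. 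I would settle it by citing Grothendieck's result (Brauer III) that $\mathrm{R}^i f_*\mathbb{G}_m=0$ for $i\ge2$ for a proper smooth curve over a base of characteristic zero, with the stalk computation above serving as the justification.
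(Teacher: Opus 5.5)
Your argument is correct and is essentially the paper's proof: the Kummer sequence plus proper base change kills the $m$-torsion, with étale-local degree-one line bundles (your Hensel lifting of a point) handling $\mathrm{R}^2\pi_*\mathbb{G}_m[m]$ and $\mathrm{R}^{\geq 3}\pi_*\mu_m=0$ handling the higher degrees, and torsion-ness of $\mathrm{H}^{\geq 2}(\cdot,\mathbb{G}_m)$ on regular noetherian schemes concludes; your ``simpler alternative'' is exactly the paper's sheaf-level version of this. The only difference is that the paper simply cites \cite[Lemma 3.5.3]{zbMATH07384449} for the torsion statement, and the point you flag as the obstacle is routine: $X$ is normal, so the stalks of $\mathrm{R}^q j_*\mathbb{G}_{m,K}$ are Galois cohomology groups of fraction fields of strict henselizations, hence torsion for $q\geq 1$.
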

\begin{proof}By the Kummer sequence on $\mathcal{M}_{g,1}$, for any $n$ we have the following short exact sequence of direct image sheaves on $\mathcal{M}_g$
$$\xymatrix{0\ar[r]& \mathrm{R}^i\pi_*\mathbb{G}_m\otimes\mathbb{Z}/n\mathbb{Z}\ar[r]^-{\delta}&\mathrm{R}^{i+1}\pi_*\mu_n\ar[r]&\mathrm{R}^{i+1}\pi_*\mathbb{G}_m[n]\ar[r]&0}.$$
Since $\mathcal{M}_{g,1}$ is regular noetherian, by \cite[Lemma 3.5.3]{zbMATH07384449} we know that for $i\geq1$, the sheaves $\mathrm{R}^{i+1}\pi_*\mathbb{G}_m$ are torsion, therefore it suffices to show $\textrm{R}^{i+1}\pi_*\mathbb{G}_m[n]=0$ for all $n$. 

For $i=1$, we know $\mathrm{R}^1\pi_*\mathbb{G}_m$ is the relative Picard sheaf, and the connecting homomorphism $\delta$ is the degree map. Since one can always find relative degree-$1$ line bundles after \'etale localization, the degree map is stalkwise surjective, so $\mathrm{R}^2\pi_*\mathbb{G}_m[n]=0$. For $i\geq2$, we know $\mathrm{R}^{i+1}\pi_*\mu_n=0$ by proper base change, hence $\mathrm{R}^{i+1}\pi_*\mathbb{G}_m[n]=0$.
\end{proof}
With this lemma, we see that the Leray spectral sequence $$\mathrm{H}^p_{\textrm{\'et}}(\mathcal{M}_g,\mathrm{R}^q\pi_*\mathbb{G}_m)\Rightarrow\mathrm{H}_{\textrm{\'et}}^{p+q}(\mathcal{M}_{g,1},\mathbb{G}_m)$$ is concentrated in two rows, and gives rise to the following long exact sequence
$$\xymatrix{\mathrm{H}^2_{\textrm{\'et}}(\mathcal{M}_{g},\mathbb{G}_m)\ar[r]^{\pi^*}&\mathrm{H}_{\textrm{\'et}}^2(\mathcal{M}_{g,1},\mathbb{G}_m)\ar[r]&\mathrm{H}_{\textrm{\'et}}^1(\mathcal{M}_{g},\mathrm{Pic}_{\mathcal{M}_{g,1}/\mathcal{M}_{g}})\\
\ar[r]^-{\mathrm{d}_2^{1,1}}&\mathrm{H}^3_{\textrm{\'et}}(\mathcal{M}_{g},\mathbb{G}_m)\ar[r]^-{\pi^*}&\mathrm{H}^3_{\textrm{\'et}}(\mathcal{M}_{g,1},\mathbb{G}_m)}$$

By Proposition \ref{vanishing-brauer}, the vanishing of $\mathrm{H}^1_{\textrm{\'et}}(\mathcal{M}_g,\mathrm{Pic}_{\mathcal{M}_{g,1}/\mathcal{M}_g})$ is equivalent to the injectivity of $$\pi^*\colon \mathrm{H}^3_{\textrm{\'et}}(\mathcal{M}_g,\mathbb{G}_m)\to\mathrm{H}^3_{\textrm{\'et}}(\mathcal{M}_{g,1},\mathbb{G}_m)$$ for $g\geq3$.
By \cite[Lemma 3.5.3]{zbMATH07384449}, the higher \'etale cohomology groups are torsion, therefore it suffices to show the injectivity on $\ell^\infty$-torsion part for all primes $\ell$: $$\pi^*\colon \mathrm{H}^3_{\textrm{\'et}}(\mathcal{M}_g,\mathbb{G}_m)[\ell^\infty]\to\mathrm{H}^3_{\textrm{\'et}}(\mathcal{M}_{g,1},\mathbb{G}_m)[\ell^\infty].$$ 

\subsection{Applying the stability result}
By taking the direct limit of the Kummer sequences, we have a commutative diagram of short exact sequences
$$\xymatrix{0\ar[r]&\mathrm{H}^2_{\mathrm{\acute{e}t}}(\mathcal{M}_{g},\mathbb{G}_m)\otimes\mathbb{Q}_\ell/\mathbb{Z}_\ell\ar[d]^{\pi^*\otimes\mathbb{Q}_\ell/\mathbb{Z}_\ell}\ar[r]&\mathrm{H}^3_{\mathrm{\acute{e}t}}(\mathcal{M}_{g},\mu_{\ell^\infty})\ar[d]^{\pi^*}\ar[r]&\mathrm{H}^3_{\mathrm{\acute{e}t}}(\mathcal{M}_{g},\mathbb{G}_m)[\ell^\infty]\ar[d]^{\pi^*}\ar[r]&0\\
0\ar[r]&\mathrm{H}^2_{\mathrm{\acute{e}t}}(\mathcal{M}_{g,1},\mathbb{G}_m)\otimes\mathbb{Q}_\ell/\mathbb{Z}_\ell\ar[r]&\mathrm{H}^3_{\mathrm{\acute{e}t}}(\mathcal{M}_{g,1},\mu_{\ell^\infty})\ar[r]&\mathrm{H}^3_{\mathrm{\acute{e}t}}(\mathcal{M}_{g,1},\mathbb{G}_m)[\ell^\infty]\ar[r]&0
}.$$
Applying the snake lemma to the above diagram yields a short exact sequence
$$\xymatrix{\mathrm{ker}(\pi^*|_{\mathrm{H}^3(\mathcal{M}_g,\mu_{\ell^\infty})})\ar[r]&\mathrm{ker}(\pi^*|_{\mathrm{H}^3(\mathcal{M}_g,\mathbb{G}_m)[\ell^\infty]})\ar[r]&\mathrm{coker}(\pi^*|_{\mathrm{H}^2(\mathcal{M}_g,\mathbb{G}_m)}\otimes\mathbb{Q}_\ell/\mathbb{Z}_\ell)}.$$
The third term vanishes by Proposition \ref{vanishing-brauer}, therefore we reduce our problem to showing the injectivity of$$\pi^*\colon\mathrm{H}^3_{\textrm{\'et}}(\mathcal{M}_g,\mu_{\ell^\infty})\to\mathrm{H}^3_{\textrm{\'et}}(\mathcal{M}_{g,1},\mu_{\ell^\infty}).$$

By the comparison theorem for the \'etale and complex topologies \cite[Theorem 3.12]{zbMATH03674235}, after fixing roots of unity, we may identify $\mathrm{H}^3_{\textrm{\'et}}(\mathcal{M}_{g,n},\mu_{\ell^\infty})$ with $\mathrm{H}^3(\mathcal{M}^{\mathrm{an}}_{g,n},\mathbb{Q}_\ell/\mathbb{Z}_\ell)$.
This further reduces Theorem \ref{Main_Theorem} to proving the injectivity of 
the pullback map on singular cohomology $$\pi^*\colon \mathrm{H}^3(\mathcal{M}_g^{\mathrm{an}},\mathbb{Q}_\ell/\mathbb{Z}_\ell)\to\mathrm{H}^3(\mathcal{M}_{g,1}^{\mathrm{an}},\mathbb{Q}_\ell/\mathbb{Z}_\ell).$$

Since $\mathbb{Q}_\ell/\mathbb{Z}_\ell$ is an injective module, by the universal coefficient theorem, for any topological space $X$, we have natural isomorphism $$\mathrm{H}^3(X,\mathbb{Q}_\ell/\mathbb{Z}_\ell)\cong\mathrm{Hom}(\mathrm{H}_3(X,\mathbb{Z}),\mathbb{Q}_\ell/\mathbb{Z}_\ell).$$
Thus the injectivity of $\pi^*$ will be implied by the surjectivity of $$\pi_*\colon \mathrm{H}_3(\mathcal{M}^{\mathrm{an}}_{g,1},\mathbb{Z})\to\mathrm{H}_3(\mathcal{M}_{g}^{\mathrm{an}},\mathbb{Z}).$$ By the discussion in Section \ref{Teich}, this is equivalent to the surjectivity of $$\mathrm{H}_3(\epsilon_g)\colon\mathrm{H}_3(\Gamma_g^1,\mathbb{Z})\to\mathrm{H}_3(\Gamma_g,\mathbb{Z}),$$ which follows from Corollary \ref{Wahl-corollary}. 

\section{Finishing the proof of Proposition \ref{vanishing-brauer}}\label{finish-proof}
\subsection{Some topological results}
For finite locally constant sheaves, we identify \'etale cohomology with complex cohomology. Let us fix a smooth genus-$g$ curve $C$ and represent the local system $\mathrm{R}^i\pi_*\mu_m$ on $\mathcal{M}_g$ via the $\Gamma_g$ action on $\mathrm{H}^i(C,\mu_m)$. Poincar\'e duality naturally identifies  $\mathrm{H}^1(C,\mathbb{Z})$ with  $\mathrm{H}_1(C,\mathbb{Z})$, we denote both by $\mathbb{H}$. 

As remarked in \cite[Corollary 22]{zbMATH01590471}, the base and fiber of $\mathrm{Pic}^d_{\mathcal{M}_{g,1}/\mathcal{M}_{g}}$ are Eilenberg-MacLane spaces, so is the total space, giving a short exact sequence of fundamental groups $$\xymatrix{0\ar[r]&\mathbb{H}\ar[r]&\pi_1(\mathrm{Pic}^d_{\mathcal{M}_{g,1}/\mathcal{M}_g})\ar[r]&\Gamma_g\ar[r]&1},$$ which corresponds to an extension class $\epsilon(d)$ in $\mathrm{H}^2(\Gamma_g,\mathbb{H})$. 

\begin{lemma}\label{identify}Under the identification $\mathrm{H}^2(\Gamma_g,\mathbb{H})\cong\mathrm{H}^2(\mathcal{M}_g,\mathrm{R}^1\pi_*\mathbb{Z})$, up to a $\pm$ sign which depends on convention, this class satisfies $\epsilon(1)=\mathrm{d}_{2,\mathbb{Z}}^{0,1}(1)$, where $\mathrm{d}_{2,\mathbb{Z}}^{0,1}\colon\mathrm{H}^0(\mathcal{M}_g,\mathrm{R}^2\pi_*\mathbb{Z})\to\mathrm{H}^2(\mathcal{M}_g,\mathrm{R}^1\pi_*\mathbb{Z})$ is the transgression map.
\end{lemma}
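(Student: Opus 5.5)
We have to show that the extension class of $\pi_1(\mathrm{Pic}^1_{\mathcal{M}_{g,1}/\mathcal{M}_g})$ equals, up to sign, the transgression of the fundamental class in the Leray (equivalently Lyndon--Hochschild--Serre) spectral sequence of $\pi$. The plan is to pass entirely to group cohomology, compare two fibrations over $B\Gamma_g$, and use naturality of the spectral sequence.

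\textbf{Step 1: the topological model.} By Section~\ref{Teich}, $\mathcal{M}_g^{\mathrm{an}}\simeq B\Gamma_g$ and $\mathcal{M}_{g,1}^{\mathrm{an}}\simeq B\Gamma_g^1$. The map $\pi$ is then the surface bundle $S_g\to B\Gamma_g^1\to B\Gamma_g$, coming from the Birman exact sequence
$$1\to\pi_1(S_g)\to\Gamma_g^1\to\Gamma_g\to 1.$$
Its Leray spectral sequence with $\mathbb{Z}$ coefficients is the Lyndon--Hochschild--Serre spectral sequence $\mathrm{H}^p(\Gamma_g,\mathrm{H}^q(\pi_1S_g,\mathbb{Z}))$. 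Likewise $\mathrm{Pic}^1$ is a torus bundle $J\to P\to B\Gamma_g$, with fibre $J=K(\mathbb{H},1)$ a torus.

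\textbf{Step 2: the Abel--Jacobi map.} The Abel--Jacobi map $\mathcal{M}_{g,1}\to\mathrm{Pic}^1_{\mathcal{M}_{g,1}/\mathcal{M}_g}$, $(C,x)\mapsto\mathcal{O}(x)$, is a morphism over $\mathcal{M}_g$. On fibres it induces abelianization $\pi_1(S_g)\to\mathbb{H}$. On fundamental groups it gives a morphism of extensions from the Birman sequence to $0\to\mathbb{H}\to\pi_1(P)\to\Gamma_g\to1$ that is the identity on $\Gamma_g$. Hence the extension $\pi_1(P)$ is the pushout of the Birman extension along $\pi_1(S_g)\to\mathbb{H}$, i.e. $\pi_1(P)=\Gamma_g^1/[\pi_1S_g,\pi_1S_g]$.

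\textbf{Step 3: the torus bundle.} For a torus bundle $K(A,1)\to E\to B$ with extension class $e\in \mathrm{H}^2(B,A)$, the transgression $d_2^{0,1}\colon \mathrm{H}^0(B,\mathrm{Hom}(A,\mathbb{Z}))\to \mathrm{H}^2(B,\mathbb{Z})$ is contraction with $e$. This is the standard fact that the transgression for a central-type extension is cup/evaluation with the extension class; it is checked directly on cochains, up to sign convention. Thus the transgression for $P$ in the row $q=1$ recovers $e=\epsilon(1)$. However, what the lemma asks for is $d_2^{0,1}$ from $\mathrm{H}^0(\mathrm{R}^2)$ to $\mathrm{H}^2(\mathrm{R}^1)$ for the curve fibration, not the torus fibration. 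So I instead argue via the pushout:

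\begin{enumerate}[label=(\roman*)]
\item In the LHS spectral sequence for $1\to N\to G\to Q\to 1$ with $N=\pi_1S_g$ a Poincaré duality group of dimension $2$, the differential $d_2\colon E_2^{0,2}=\mathrm{H}^0(Q,\mathrm{H}^2(N))\to E_2^{2,1}=\mathrm{H}^2(Q,\mathrm{H}^1(N))$ is computed by the cup product structure. Since $\mathrm{H}^2(N)=\mathrm{H}^1(N)\cup\mathrm{H}^1(N)$, one has $d_2(a\cup b)=d_2a\cup b\pm a\cup d_2b$, where $d_2\colon E^{0,1}\to E^{2,0}$ is determined by the extension class of $G/[N,N]$. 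This extension is exactly the one found in Step 2.
\item Concretely, write $1=\sum a_i\cup b_i$ for a symplectic basis. The formula then gives $d_2(1)=\sum(\langle\epsilon,a_i\rangle b_i-\langle\epsilon,b_i\rangle a_i)$. Under Poincaré duality $\mathbb{H}^\vee\cong\mathbb{H}$ this is $\pm\epsilon(1)$.
\end{enumerate}

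The main obstacle is making (i) rigorous. The $d_2$ on $E^{0,1}$ is only defined on invariants, and $\mathrm{H}^1(N)$ has no $Q$-invariants, so the Leibniz argument must be run at the cochain level or on the twisted coefficient version. I would first try to avoid this by mapping instead to $\mathrm{H}^2$ with local coefficients: compare the spectral sequences for the Birman extension and for its pushout $E=\pi_1(P)$ via Step 2. The map $\mathrm{H}^*(J)\to\mathrm{H}^*(S_g)$ is an isomorphism in degree $1$ and sends the symplectic class in $\wedge^2\mathbb{H}^\vee$ to the fundamental class. By naturality it therefore suffices to compute $d_2$ on $\mathrm{H}^0(\Gamma_g,\wedge^2\mathbb{H}^\vee)\to\mathrm{H}^2(\Gamma_g,\mathbb{H}^\vee)$ for the torus bundle. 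There, the Leibniz rule holds with coefficients in the exterior algebra, applied fibrewise to the universal twisted class $\mathrm{id}\in \mathrm{H}^0(\mathbb{H}^\vee\otimes\mathbb{H})$. Step 3 together with contraction against the symplectic form then gives $\pm\epsilon(1)$.
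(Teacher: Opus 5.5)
\paragraph{Verdict.} Your proposal has a genuine gap. It takes a different route from the paper, whose two-line proof identifies the \v{C}ech complex of the Teichm\"uller cover with the bar complex of $\Gamma_g$ and compares cocycles directly. You go through Abel--Jacobi naturality and a torus bundle. Steps 1 and 2 are fine, and so is the $q=1$ statement in Step 3. The trouble is the final reduction.

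\paragraph{The Abel--Jacobi comparison.} The map $\mathrm{H}^*(J)\to\mathrm{H}^*(S_g)$ does not send the symplectic class to the fundamental class. On $\mathrm{H}^2$ it is the cup product $\wedge^2\mathrm{H}^1(S_g)\to\mathrm{H}^2(S_g)$, so $\omega=\sum_{i=1}^g a_i\wedge b_i\mapsto\sum_i a_i\cup b_i=g\,[S_g]^*$; this is Poincar\'e's formula $\theta\cdot C=g$. Since $\mathrm{H}^0(\Gamma_g,\wedge^2\mathbb{H}^\vee)=\mathbb{Z}\omega$, the fundamental class is not in the image of the torus bundle's $E_2^{0,2}$ at all. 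Naturality therefore only gives $g\cdot\mathrm{d}_2(1)=\mathrm{AJ}^*\mathrm{d}_2^P(\omega)$ in $\mathrm{H}^2(\Gamma_g,\mathbb{H})\cong\mathbb{Z}/(2g-2)\mathbb{Z}$. For even $g$, multiplication by $g$ kills the element of order $2$, so even exact knowledge of $\mathrm{d}_2^P(\omega)$ would not determine $\mathrm{d}_2(1)$.

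\paragraph{The torus-bundle formula.} The claimed formula $\mathrm{d}_2^P(\omega)=\pm\iota_{\epsilon(1)}\omega$ is not justified, and it is false in this setting. The twisted Leibniz argument with $\mathrm{id}\in\mathrm{H}^0(\Gamma_g,\mathbb{H}^\vee\otimes\mathbb{H})$ only reaches $\omega_*(\mathrm{id}\cup\mathrm{id})=2\omega$. So it computes $\mathrm{d}_2^P(2\omega)$, that is, $\mathrm{d}_2^P(\omega)$ only modulo $2$-torsion. That $2$-torsion correction is genuinely nonzero:
\begin{itemize}
\item On $\mathrm{Pic}^{g-1}_{\mathcal{M}_{g,1}/\mathcal{M}_g}$ the relative theta divisor is a global class restricting to $\omega$ on every fibre, so $\mathrm{d}_2(\omega)=0$ there.
\item The naive contraction would instead give $\iota_{\epsilon(g-1)}\omega=(g-1)\,\iota_{\epsilon(1)}\omega\neq0$, because $\epsilon(g-1)$ is the element of order $2$ by Lemma~\ref{Hain-lemma}.
\end{itemize}
For torus bundles with nontrivial monodromy, $\mathrm{d}_2$ on $\wedge^2$ is contraction with the extension class only up to a module-dependent $2$-torsion term.

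\paragraph{What is missing.} Even with both points repaired, your route determines $\mathrm{d}_2(1)$ only modulo the order-$2$ element $(g-1)\epsilon(1)$. It therefore cannot prove $\mathrm{d}_2(1)=\pm\epsilon(1)$. For even $g$ it cannot even decide whether $\mathrm{d}_2(1)$ generates, since $g\,\epsilon(1)$ has order $g-1$. The missing ingredient is a direct identification of the transgression of the fibre class with the abelianized section obstruction, namely the class of $\Gamma_g^1/[\pi_1S_g,\pi_1S_g]$. One way is the cochain-level comparison in the bar complex of $\Gamma_g$ that the paper invokes. Another is fibrewise Poincar\'e duality combined with obstruction theory. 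Either avoids the torus bundle entirely.
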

\begin{proof}The transgression map can be calculated by \v{C}ech cohomology with respect to the Teichm\"uller covering $\{\mathcal{T}_{g}\to\mathcal{M}_{g}\}$. The \v{C}ech nerve $T_{g}^{\times_{\mathcal{M}_g}k+1}\cong(\Gamma_g)^k\times\mathcal{T}_g$ and the \v{C}ech complex is isomorphic to the bar complex for group cohomology of $\Gamma_g$.
\end{proof}

\begin{lemma}{\cite[Theorem 24]{zbMATH01590471}}\label{Hain-lemma} For $g\geq3$, the map $\mathbb{Z}/(2g-2)\mathbb{Z}\to\mathrm{H}^2(\Gamma_g,\mathbb{H})$ that takes $\overline{d}$ to the class $\epsilon(d)$ corresponding to $\mathrm{Pic}^d_{\mathcal{M}_{g,1}/\mathcal{M}_g}$ is a group isomorphism.
\end{lemma}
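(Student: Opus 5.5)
This statement is Hain's theorem, cited from \cite[Theorem 24]{zbMATH01590471}. If I had to reprove it, I would use three inputs: the computation $\mathrm{H}^2(\Gamma_g,\mathbb{H})\cong\mathbb{Z}/(2g-2)\mathbb{Z}$ (Morita), additivity of $d\mapsto\epsilon(d)$, and the fact that $\epsilon(1)$ is a generator.

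\textbf{Step 1: the map is a homomorphism.} I would show that $d\mapsto \epsilon(d)$ is additive in $d$. The map $(L,M)\mapsto L\otimes M$ from $\mathrm{Pic}^d\times_{\mathcal{M}_g}\mathrm{Pic}^{e}$ to $\mathrm{Pic}^{d+e}$ is compatible with the fibrewise group law. On fundamental groups, this identifies the extension for $d+e$ with the Baer sum of the extensions for $d$ and $e$, so $\epsilon(d+e)=\epsilon(d)+\epsilon(e)$. The class $\epsilon(0)$ vanishes because $\mathrm{Pic}^0$ has the zero section, which splits the sequence. The class $\epsilon(2g-2)$ vanishes because of the canonical section $K$. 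Hence the map factors through $\mathbb{Z}/(2g-2)\mathbb{Z}$ and is determined by $\epsilon(1)$. Equivalently, by Lemma \ref{identify}, $\epsilon(d)=d\cdot \mathrm{d}^{0,1}_{2,\mathbb{Z}}(1)$, since $\mathrm{Pic}^d$ corresponds to the class $d\in \mathrm{H}^0(\mathrm{R}^2\pi_*\mathbb{Z})$.

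\textbf{Step 2: compute the target.} I would invoke Morita's computation that $\mathrm{H}^2(\Gamma_g,\mathbb{H})\cong\mathbb{Z}/(2g-2)\mathbb{Z}$ for $g\ge3$. One route is the Hochschild--Serre spectral sequence for $1\to\pi_1(S_g)\to\Gamma_g^1\to\Gamma_g\to1$ with coefficients in $\mathbb{Z}$, together with the known groups $\mathrm{H}^1(\Gamma_g^1,\mathbb{Z})=0$, $\mathrm{H}^2(\Gamma_g^1)\cong\mathbb{Z}^2$ generated by $\kappa_1$ and $\psi$, $\mathrm{H}^2(\Gamma_g)\cong\mathbb{Z}$, and $\mathrm{H}^1(\Gamma_g,\mathbb{H})=0$. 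The relevant part of the sequence is
\[
0\to \mathrm{H}^2(\Gamma_g)\to \mathrm{H}^2(\Gamma_g^1)\to \mathrm{H}^0(\Gamma_g,\mathrm{H}^2(S_g))=\mathbb{Z}\xrightarrow{d_2}\mathrm{H}^2(\Gamma_g,\mathbb{H})\to \ker\bigl(\mathrm{H}^3(\Gamma_g)\to \mathrm{H}^3(\Gamma_g^1)\bigr).
\]
The middle map sends $\psi$ to its fibre degree, which is $2-2g$ up to sign. The cokernel of $\mathrm{H}^2(\Gamma_g^1)\to\mathbb{Z}$ is therefore $\mathbb{Z}/(2g-2)$, and $d_2(1)$ generates this image. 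The last term vanishes because $\mathrm{H}^3(\Gamma_g)\to\mathrm{H}^3(\Gamma_g^1)$ is injective, which is dual to Corollary \ref{Wahl-corollary} up to the torsion/Ext issue. That issue is handled by running the same argument with $\mathbb{Q}/\mathbb{Z}$ or finite coefficients.

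\textbf{Step 3: conclude.} Lemma \ref{identify} identifies $\epsilon(1)$ with $\pm d_2(1)$, which is a generator by Step 2. So the homomorphism $\mathbb{Z}/(2g-2)\to\mathbb{Z}/(2g-2)$ is surjective between finite groups of the same order, hence an isomorphism.

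The main obstacle is Step 2, specifically the injectivity of $\mathrm{H}^3(\Gamma_g,\mathbb{Z})\to\mathrm{H}^3(\Gamma_g^1,\mathbb{Z})$ with integral coefficients for $g=3$. The homology surjectivity only controls the $\mathrm{Hom}$-part of the universal coefficient sequence. For the $\mathrm{Ext}(\mathrm{H}_2,\mathbb{Z})$ part I would need the torsion in $\mathrm{H}_2(\Gamma_g^1)\to\mathrm{H}_2(\Gamma_g)$ to behave, using stability in degree $2$. I would rely on Morita/Hain for this rather than reprove it.
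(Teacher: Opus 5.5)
The paper does not prove this lemma; it quotes it from Hain--Reed, who build on Morita. Your Steps 1 and 3 are fine. Step 2, which carries all the content, is wrong as written.

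The displayed sequence is not what the Hochschild--Serre spectral sequence of $1\to\pi_1(S_g)\to\Gamma_g^1\to\Gamma_g\to1$ gives. There is no map $\mathrm{H}^2(\Gamma_g,\mathbb{H})\to\ker\bigl(\mathrm{H}^3(\Gamma_g)\to\mathrm{H}^3(\Gamma_g^1)\bigr)$, because it would have bidegree $(1,-1)$. The only differential leaving $E_2^{2,1}$ is $d_2^{2,1}\colon\mathrm{H}^2(\Gamma_g,\mathbb{H})\to\mathrm{H}^4(\Gamma_g,\mathbb{Z})$. Write $\epsilon_g^*$ for pullback along $\Gamma_g^1\to\Gamma_g$ and use $E_2^{0,1}=E_2^{1,1}=E_2^{1,2}=E_2^{0,3}=0$. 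The spectral sequence then gives
\[
\ker(d_2^{0,2})/E_\infty^{0,2}\;\cong\;\ker\bigl(\epsilon_g^*\colon\mathrm{H}^3(\Gamma_g,\mathbb{Z})\to\mathrm{H}^3(\Gamma_g^1,\mathbb{Z})\bigr)\qquad(\text{via }d_3^{0,2}),
\]
together with an exact sequence
\[
0\to\mathrm{coker}\bigl(\epsilon_g^*\text{ on }\mathrm{H}^3\bigr)\to\mathrm{H}^2(\Gamma_g,\mathbb{H})/\langle d_2^{0,2}(1)\rangle\to\ker\bigl(\epsilon_g^*\text{ on }\mathrm{H}^4\bigr)\to0.
\]
So injectivity on $\mathrm{H}^3$ belongs at the $\mathbb{Z}$ spot of your sequence, where you asserted exactness without justification. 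Combined with $E_\infty^{0,2}=(2g-2)\mathbb{Z}$, it gives $\ker d_2^{0,2}=(2g-2)\mathbb{Z}$, so $\epsilon(1)$ has order exactly $2g-2$. That proves only injectivity of $\overline{d}\mapsto\epsilon(d)$.

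Surjectivity would need $\epsilon_g^*$ to be surjective on $\mathrm{H}^3$ and injective on $\mathrm{H}^4$ with integral coefficients. Neither follows from your stability input: Corollary~\ref{Wahl-corollary} is dual only to injectivity on $\mathrm{H}^3$. For $g=3$ both are unstable information about $\mathrm{H}^3(\Gamma_3^1)$ and $\mathrm{H}^4(\Gamma_3)$. The bound $|\mathrm{H}^2(\Gamma_g,\mathbb{H})|\le 2g-2$ is exactly Morita's theorem, and your route does not recover it.

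The repair is to do what the first sentence of your Step 2 says: take $|\mathrm{H}^2(\Gamma_g,\mathbb{H})|=2g-2$ from Morita as a black box. Then Step 1, Lemma~\ref{identify} and the corrected order computation show that $\epsilon(1)$ has order $2g-2$ in a group of order $2g-2$, which finishes the proof.

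In that argument your Ext worry is harmless. By Wahl's theorem, $\mathrm{H}_2(\Gamma_{g,1})\to\mathrm{H}_2(\Gamma_g)$ is an isomorphism for $g\ge3$. Hence $\mathrm{H}_2(\Gamma_g^1)\to\mathrm{H}_2(\Gamma_g)$ is split surjective, and the $\mathrm{Ext}(\mathrm{H}_2(-),\mathbb{Z})$ parts inject. Together with $\mathrm{H}_3$-surjectivity, the natural universal coefficient sequences give integral injectivity on $\mathrm{H}^3$.

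One small correction: integrally, $\mathrm{H}^2(\Gamma_g,\mathbb{Z})$ is generated by $\lambda$, not by $\kappa_1=12\lambda$. The fact you need for $E_\infty^{0,2}=(2g-2)\mathbb{Z}$ is $\mathrm{H}^2(\Gamma_g^1,\mathbb{Z})=\epsilon_g^*\mathrm{H}^2(\Gamma_g,\mathbb{Z})\oplus\mathbb{Z}\psi$.
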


\subsection{The key statement} In Section \ref{2.3}, Proposition \ref{vanishing-brauer} was reduced to the statement that $$\mathrm{H}^2(\mathcal{M}_{g,1},\mu_m)=\pi^*\mathrm{H}^2(\mathcal{M}_g,\mu_m)+\delta(\mathrm{Pic}(\mathcal{M}_{g,1})).$$
We prove, independent of \cite[Theorem 4.1]{zbMATH07456405}, that the statement holds for all $g\geq3$.
\begin{proposition}\label{add}For the Leray spectral sequence of $\mu_m$ along $\pi\colon \mathcal{M}_{g,1}\to\mathcal{M}_g$, we have:
\begin{enumerate}
\item $E_r^{0,2}$ stabilizes on the $E_3$ page, and equals the subgroup of $\mathrm{H}^0(\mathcal{M}_{g},\mathrm{R}^2\pi_*\mu_m)$ generated by the class of the relative canonical bundle $\mathcal{L}:=\omega_{\mathcal{M}_{g,1}}\otimes\pi^*\omega_{\mathcal{M}_{g}}^{\otimes-1}$.
\item $E_r^{1,1}$ stabilizes on the $E_2$ page, and $\mathrm{H}^1(\mathcal{M}_g,\mathrm{R}^1\pi_*\mu_m)\cong\mathbb{Z}/(m,2g-2)\mathbb{Z}$. Furthermore, the group is generated by the image of $\mathcal{L}^{\otimes \frac{m}{(m,2g-2)}}$ in $F^1/F^2=E_\infty^{1,1}$.
\end{enumerate}
Therefore any class in $\mathrm{H}^2(\mathcal{M}_{g,1},\mu_m)$ can be modified to $\pi^*\mathrm{H}^2(\mathcal{M}_{g},\mu_m)$ by adding suitable multiple of $\mathcal{L}$, hence $\mathrm{H}^2(\mathcal{M}_{g,1},\mu_m)=\pi^*\mathrm{H}^2(\mathcal{M}_{g},\mu_m)+\delta(\mathrm{Pic}(\mathcal{M}_{g,1}))$.
\end{proposition}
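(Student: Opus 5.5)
The plan is to work with the Leray spectral sequence $E_2^{p,q}=\mathrm{H}^p(\mathcal{M}_g,\mathrm{R}^q\pi_*\mu_m)\Rightarrow\mathrm{H}^{p+q}(\mathcal{M}_{g,1},\mu_m)$. Throughout, the étale groups are identified with cohomology of $\Gamma_g$ with coefficients in $\mathrm{H}^q(C,\mu_m)$, via the comparison theorem and Section \ref{Teich}. The fiber is a curve, so $\mathrm{R}^2\pi_*\mu_m\cong\mathbb{Z}/m\mathbb{Z}$ (trivial local system, via the orientation/trace), $\mathrm{R}^1\pi_*\mu_m\cong\mathbb{H}/m$, and $\mathrm{R}^0\pi_*\mu_m=\mu_m$.

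For (1), the relevant differentials are $\mathrm{d}_2^{0,2}\colon E_2^{0,2}=\mathbb{Z}/m\to\mathrm{H}^2(\Gamma_g,\mathbb{H}/m)$ and $\mathrm{d}_3^{0,2}\colon E_3^{0,2}\to E_3^{3,0}$. First, naturality of the spectral sequence under the reduction $\mathbb{Z}\to\mathbb{Z}/m$, together with Lemma \ref{identify}, shows that $\mathrm{d}_2^{0,2}(1)$ is the reduction mod $m$ of $\epsilon(1)\in\mathrm{H}^2(\Gamma_g,\mathbb{H})$. By Lemma \ref{Hain-lemma}, $\epsilon(1)$ has order exactly $2g-2$.

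I then need the order of its image in $\mathrm{H}^2(\Gamma_g,\mathbb{H}/m)$. The Bockstein sequence for $0\to\mathbb{H}\xrightarrow{m}\mathbb{H}\to\mathbb{H}/m\to0$ shows that the kernel of reduction is $m\mathrm{H}^2(\Gamma_g,\mathbb{H})$. Since $\mathrm{H}^2(\Gamma_g,\mathbb{H})\cong\mathbb{Z}/(2g-2)$ by Lemma \ref{Hain-lemma}, the image of $\epsilon(1)$ has order $(m,2g-2)$. Hence $\ker \mathrm{d}_2^{0,2}$ is generated by $\frac{(m,2g-2)}{\,}$-multiples in the appropriate sense, namely by $(m,2g-2)\cdot 1$. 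This is exactly the fiber degree class of $\mathcal{L}$, since $\deg\omega_C=2g-2$. The class $\delta(\mathcal{L})$ is a global class on $\mathcal{M}_{g,1}$ restricting to it, so it survives to $E_\infty$. Therefore $\mathrm{d}_3$ vanishes on $E_3^{0,2}$, and $E_3^{0,2}=E_\infty^{0,2}=\langle[\mathcal{L}]\rangle$.

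For (2), I need $\mathrm{H}^1(\Gamma_g,\mathbb{H}/m)$. The Bockstein sequence gives
\[0\to\mathrm{H}^1(\Gamma_g,\mathbb{H})/m\to\mathrm{H}^1(\Gamma_g,\mathbb{H}/m)\to\mathrm{H}^2(\Gamma_g,\mathbb{H})[m]\to0.\]
Morita's computation gives $\mathrm{H}^1(\Gamma_g,\mathbb{H})=0$ for $g\geq3$; alternatively this follows from Hain's work underlying Lemma \ref{Hain-lemma}, or from the vanishing of $\mathrm{H}^0(\Gamma_g,\mathbb{H}\otimes\mathbb{Q}/\mathbb{Z})$ and finiteness. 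The $m$-torsion of $\mathbb{Z}/(2g-2)$ is $\mathbb{Z}/(m,2g-2)$, which gives $E_2^{1,1}\cong\mathbb{Z}/(m,2g-2)$.

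For stabilization, I treat the incoming and outgoing differentials separately. The incoming differentials come from $E_2^{-1,2}=0$, so there is nothing to check there. The outgoing differential is $\mathrm{d}_2^{1,1}\colon E_2^{1,1}\to E_2^{3,0}=\mathrm{H}^3(\Gamma_g,\mu_m)$. I would kill it by producing enough permanent cycles: show that $E_2^{1,1}$ is spanned by classes of genuine elements of $\mathrm{H}^2(\mathcal{M}_{g,1},\mu_m)$, which forces $\mathrm{d}_2^{1,1}=0$.

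The candidate is the Kummer class of $\mathcal{L}^{\otimes k}$ with $k=\frac{m}{(m,2g-2)}$. Its fiber degree is $k(2g-2)$, which is divisible by $m$, so its image in $E^{0,2}$ vanishes and it lies in $F^1$. I must check that its image in $F^1/F^2\subseteq E_2^{1,1}$ generates. Conceptually, the Bockstein image in $\mathrm{H}^2(\Gamma_g,\mathbb{H})[m]$ should be $\epsilon(k(2g-2)/m)$-type data, i.e. the class of the torsor $\mathrm{Pic}^{k(2g-2)/m}$ relative to $m$-division. Concretely, $\mathcal{L}^{k}$ has an $m$-th root fiberwise but not globally. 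The obstruction to a global $m$-th root is the $\mathrm{Pic}^0[m]$-torsor of $m$-th roots, and its image in $\mathrm{H}^2(\Gamma_g,\mathbb{H})$ is $\epsilon\bigl(\tfrac{k(2g-2)}{m}\bigr)=\epsilon\bigl(\tfrac{2g-2}{(m,2g-2)}\bigr)$. This has order $(m,2g-2)$, so it generates $\mathbb{Z}/(2g-2)[m]$.

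I expect the main obstacle to be exactly this identification of the $E^{1,1}$-component of a Kummer class with a Bockstein of $\epsilon$. I would do it by comparing the Kummer sequence with the extension of $\pi_1$ from Section \ref{finish-proof}, using the Čech/bar-complex description as in the proof of Lemma \ref{identify}.

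Given (1) and (2), the final assertion follows. Take $x\in\mathrm{H}^2(\mathcal{M}_{g,1},\mu_m)$. Its image in $F^0/F^1=E_\infty^{0,2}$ is a multiple of $[\mathcal{L}]$, so subtracting $\delta$ of a power of $\mathcal{L}$ puts it in $F^1$. Its image in $F^1/F^2=E_\infty^{1,1}$ is a multiple of $[\mathcal{L}^k]$, so subtracting again puts it in $F^2=\pi^*\mathrm{H}^2(\mathcal{M}_g,\mu_m)$.
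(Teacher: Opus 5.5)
Your proposal is correct and follows essentially the same route as the paper. You identify $\mathrm{d}_2^{0,2}(1)$ with the mod-$m$ reduction of $\epsilon(1)$ and read off its order $(m,2g-2)$ from Lemma \ref{Hain-lemma}; you compute $E_2^{1,1}\cong\mathrm{H}^2(\Gamma_g,\mathbb{H})[m]$ by the Bockstein sequence using $\mathrm{H}^1(\Gamma_g,\mathbb{H})=0$; and you kill $\mathrm{d}_2^{1,1}$ by showing that the Kummer class of $\mathcal{L}^{\otimes m/(m,2g-2)}$ generates, via the $\mathrm{Pic}^0[m]$-torsor of $m$-th roots, whose Bockstein image is the class of $\mathrm{Pic}^{(2g-2)/(m,2g-2)}$ (pushed out along $\mathrm{Pic}^0[m]\hookrightarrow\mathrm{Pic}^0$), which is a cocycle-free version of the paper's \v{C}ech computation.
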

\begin{proof} 
(1) In order to determine $E_r^{0,2}$, we first look at the transgression map $$\mathrm{d}_2^{0,2}\colon \mathrm{H}^0(\mathcal{M}_g,\mathrm{R}^2\pi_*\mu_m)\to \mathrm{H}^2(\mathcal{M}_g,\mathrm{R}^1\pi_*\mu_m),$$ 
In general the map is given by taking Yoneda product with the class $c\in\mathrm{Ext}^2(\mathrm{R}^2\pi_*\mu_m,\mathrm{R}^1\pi_*\mu_m)$ given via the triangle $$\xymatrix{\mathrm{R}^1\pi_*\mu_m\ar[r]& \tau_{[1,2]}\mathrm{R}\pi_*\mu_m\ar[r]&\mathrm{R}^2\pi_*\mu_m[-1]\ar[r]^-{c}& \mathrm{R}^1\pi_*\mu_m[1]}.$$Since Kummer sequence shows that $\mu_m$ is quasi-isomorphic to the complex $[\xymatrix{\mathbb{G}_m\ar[r]^-{m}&\mathbb{G}_m}]$ concentrated in degrees $0,1$, using Lemma \ref{higher-vanishing}, we see that the transgression map can be obtained by taking cup product with the extension class of
$$\xymatrix{0\ar[r]&\mathrm{R}^1\pi_*\mu_m\ar[r]&\mathrm{R}^1\pi_*\mathbb{G}_m\ar[r]^-m&\mathrm{R}^1\pi_*\mathbb{G}_m\ar[r]&\mathrm{R}^2\pi_*\mu_m\ar[r]&0}.$$

We wish to identify this map with the topological description in Lemma \ref{Hain-lemma}.
Choose a primitive $m$-th root of unity $\zeta_m$ and consider the commutative diagram of analytic sheaves on $\mathcal{M}_{g,1}$
$$\xymatrix{0\ar[r]&\mathbb{Z}\ar[r]\ar[d]&\mathcal{O}\ar[d]_{\mathrm{exp}}\ar[r]^-{\mathrm{exp}}&\mathcal{O}^*\ar[d]^-m\ar[r]&0\\
0\ar[r]&\mathbb{Z}/m\mathbb{Z}\cdot\zeta_m\ar[r]&\mathcal{O}^*\ar[r]^-m&\mathcal{O}^*\ar[r]&0}$$
it yields the commutative diagram of direct images
$$\xymatrix{\mathrm{R}^1\pi_*\mathbb{Z}\ar[d]\ar[r]&\mathrm{R}^1\pi_*\mathcal{O}\ar[r]^-{\mathrm{exp}}\ar[d]_-{\mathrm{exp}}&\mathrm{R}^1\pi_*\mathcal{O}^*\ar[d]^-m\ar[r]^\delta&\mathrm{R}^2\pi_*\mathbb{Z}\ar[d]\\
\mathrm{R}^1\pi_*\mathbb{Z}/m\mathbb{Z}\ar[r]&\mathrm{R}^1\pi_*\mathcal{O}^*\ar[r]^-{m}&\mathrm{R}^1\pi_*\mathcal{O}^*\ar[r]^-\delta&\mathrm{R}^2\pi_*\mathbb{Z}/m\mathbb{Z}.}$$
Naturality of the Yoneda pairing gives us commutative diagram
$$\xymatrix{\mathrm{H}^0(\mathcal{M}_g,\mathrm{R}^2\pi_*\mathbb{Z})\ar[r]^-{\mathrm{d}_{2,\mathbb{Z}}^{0,2}}\ar[d]^\alpha&\mathrm{H}^2(\mathcal{M}_g,\mathrm{R}^1\pi_*\mathbb{Z})\ar[d]^{\beta}\ar@{=}[r]&\mathrm{H}^1(\Gamma_g,\mathbb{H})\cong\frac{\mathbb{Z}}{(2g-2)\mathbb{Z}}\cdot\epsilon(1)\\
\mathrm{H}^0(\mathcal{M}_g,\mathrm{R}^2\pi_*\left(\mathbb{Z}/m\mathbb{Z}\right))\ar[r]^-{\mathrm{d}_2^{0,2}}&\mathrm{H}^2(\mathcal{M}_g,\mathrm{R}^1\pi_*\left(\mathbb{Z}/m\mathbb{Z}\right))\ar@{=}[r]&\mathrm{H}^2(\Gamma_g,\mathbb{H}/m\mathbb{H})}$$
the left vertical map $\alpha$ is quotient by $m$. 
Running the Bockstein sequence $$\xymatrix{0\ar[r]&\mathrm{R}^1\pi_*\mathbb{Z}\ar[r]^-{m}&\mathrm{R}^1\pi_*\mathbb{Z}\ar[r]&\mathrm{R}^1\pi_*\left(\mathbb{Z}/m\mathbb{Z}\right)\ar[r]&0}$$
we see that $\beta$ factors through the natural injection $$i\colon\mathrm{H}^2(\mathcal{M}_g,\mathrm{R}^1\pi_*\mathbb{Z})\otimes \left(\mathbb{Z}/m\mathbb{Z}\right)\to\mathrm{H}^2(\mathcal{M}_g,\mathrm{R}^1\pi_*(\mathbb{Z}/m\mathbb{Z})).$$
By Lemma \ref{Hain-lemma} and Lemma \ref{identify}, under the identification $$\mathrm{H}^2(\mathcal{M}_g,\mathrm{R}^1\pi_*\mathbb{Z})\otimes(\mathbb{Z}/m\mathbb{Z})\cong\mathrm{H}^2(\Gamma_g,\mathbb{H})\otimes(\mathbb{Z}/m\mathbb{Z})\cong\mathbb{Z}/(m,2g-2)\mathbb{Z}\cdot\epsilon(1),$$ the order of  $\mathrm{d}_2^{0,2}(\overline{1})$ equals $(m,2g-2)$, hence $\mathrm{ker}(\mathrm{d}_2^{0,2})$ is the subgroup of $\mathrm{H}^0(\mathcal{M}_{g},\mathrm{R}^2\pi_*\mu_m)=\mathbb{Z}/m\mathbb{Z}$ generated by the class $\overline{2g-2}$. This class is represented by the relative canonical bundle, and survives to the $E_\infty$ page, therefore $E_{r\geq3}^{0,2}$ stabilizes.

(2) Since $\deg\mathcal{L}=2g-2$, the degree of $\mathcal{L}^{\otimes\frac{m}{(m,2g-2)}}$ is divisible by $m$, so its chern class $\eta=c_1(\mathcal{L}^{\otimes\frac{m}{(m,2g-2)}})$ lies in $F^1$. We aim to show that the image $\overline{\eta}$ generates $F^1/F^2\subseteq \mathrm{H}^1(\mathcal{M}_g,\mathrm{R}^1\pi_*(\mathbb{Z}/m\mathbb{Z}))$. 

As argued in \cite[Proposition 23]{zbMATH01590471}, for $g\geq2$, we have $\mathrm{H}^1(\Gamma_g,\mathbb{H})=0$. Therefore the boundary map induced by the Bockstein sequence gives an isomorphism  $$\delta\colon\mathrm{H}^1(\mathcal{M}_g,\mathrm{R}^1\pi_*(\mathbb{Z}/m\mathbb{Z}))\cong\mathrm{H}^2(\mathcal{M}_g,\mathrm{R}^1\pi_*\mathbb{Z})[m].$$
By Lemma \ref{Hain-lemma}, the group $\mathrm{H}^2(\mathcal{M}_g,\mathrm{R}^1\pi_*\mathbb{Z})[m]=\mathrm{H}^2(\Gamma_g,\mathbb{H})[m]$ is generated by $\frac{2g-2}{(m,2g-2)}\cdot\epsilon(1)$, where $\epsilon(1)=\mathrm{d}_2^{0,1}(1)$ up to a sign. Since $\eta$ is the cohomology class of a global line bundle, it survives to the $E_\infty$ page. Therefore it suffices to show that $$\delta(\eta)=\frac{2g-2}{(m,2g-2)}\cdot\mathrm{d}_{2,\mathbb{Z}}^{0,1}(1).$$
\begin{itemize}
\item First we calculate $\eta=c_1(\mathcal{L}^{\otimes\frac{2g-2}{(2g-2,m)}})$, using that the chern class is the connecting homomorphism of the Kummer sequence.
We pick an $m$-th root line bundle $\mathcal{N}_i\in\pi^{-1}(U_i)$ for $\mathcal{L}^{\otimes\frac{m}{(m,2g-2)}}$ on some open covering $\{U_i\}_{i\in I}$. This yields a 0-cocycle. Since $\mathcal{N}_i^{\otimes m}|_{U_{ij}}\cong\mathcal{L}|_{U_{ij}}\cong\mathcal{N}_{j}^{\otimes m}|_{U_{ij}}$, after taking coboundary, the \v{C}ech $1$-cocycle takes values in $\mathrm{Pic}^0_{\mathcal{M}_{g,1}/\mathcal{M}_g}[m]$.
\item We identify $\mathrm{Pic}_{\mathcal{M}_{g,1}/\mathcal{M}_g}^0[m]$ with $\mathrm{R}^1\pi_*(\mathbb{Z}/m\mathbb{Z})$ using the exact sequence of topological uniformization $$\xymatrix{0\ar[r]& \mathrm{R}^1\pi_*\mathbb{Z}\ar[r]&\mathrm{R}^1\pi_*\mathbb{R}\cong R^1\pi_*\mathcal{O}\ar[r]&\mathrm{Pic}^0_{\mathcal{M}_{g,1}/\mathcal{M}_g}\ar[r]&0},$$
where the identification in the middle is obtained via Hodge theory using inclusion and projection $\mathrm{R}^1\pi_*\mathbb{R}\to\mathrm{R}^1\pi_*\mathbb{C}\to\mathrm{R}^1\pi_*\mathcal{O}$.
For any local section $h_i$ of $\mathrm{Pic}^0_{\mathcal{M}_{g,1}/\mathcal{M}_g}[m]$, we lift it to $\widetilde{h}_i$ on $\mathrm{R}^1\pi_*\mathbb{R}$. Since $mh_i=0$, we know that $m\widetilde{h}_i$ is a local section of $\mathrm{R}^1\pi_*\mathbb{Z}$. Note that for different choices of $h_i$, the section $m\widetilde{h}_i$ differ by $m\mathrm{R}^1\pi_*\mathbb{Z}$, therefore the identification $\mathrm{Pic}^0_{\mathcal{M}_{g,1}/\mathcal{M}_g}\to\mathrm{R}^1\pi_*(\mathbb{Z}/m\mathbb{Z}),\ h_i\mapsto m\widetilde{h}_i$ is well-defined. The class $\eta$ is represented by the cocycle which takes value  $m\widetilde{n}_{ij}$ on $U_{ij}$, where $\widetilde{n}_{ij}$ are local sections to $\mathrm{R}^1\pi_*\mathbb{R}$ that represents $\mathcal{N}_i\otimes\mathcal{N}_j^{\otimes-1}$.
\item The class $\delta(\eta)$ is calculated by tracing back the Bockstein sequence. Lifting the local sections $m\widetilde{n}_{ij}$ in $\mathbb{R}^1\pi_*(\mathbb{Z}/m\mathbb{Z})$ to $m\widetilde{n}_{ij}$ in $\mathbb{R}^1\pi_*\mathbb{Z}$, taking boundary then divide by $m$, we get the 2-coboundary that takes value $\widetilde{n}_{ij}-\widetilde{n}_{jk}+\widetilde{n}_{ki}$ on $U_{ijk}$.
\item On the other hand, using the previous uniformization sequence, the class of the torsor $\mathrm{Pic}^{\frac{2g-2}{(m,2g-2)}}_{\mathcal{M}_{g,1}/\mathcal{M}_g}$
 is obtained by the connecting homomorphism $\mathrm{H}^1(\mathcal{M}_g,\mathrm{Pic}^0_{\mathcal{M}_{g,1}/\mathcal{M}_g})\to\mathrm{H}^2(\mathcal{M}_g,\mathrm{R}^1\pi_*\mathbb{Z})$ which acts on the $1$-cocycle $\mathcal{N}_{i}\otimes\mathcal{N}_j^{\otimes-1}$ by lifting the section in $\mathrm{Pic}^0_{\mathcal{M}_{g,1}/\mathcal{M}_g}$ to $\widetilde{n}_{ij}$ in $\mathrm{R}^1\pi_*\mathbb{R}$ then taking coboundary. This coincides with the class of $\delta(\eta)$. This 2-cocycle represents the class of the Picard torsor $\mathrm{Pic}^{\frac{2g-2}{(m,2g-2)}}_{\mathcal{M}_{g,1}/\mathcal{M}_g}$, since $\mathcal{N}_i$ are line bundles of degree $\frac{2g-2}{(m,2g-2)}$.
\end{itemize}
Therefore we have proved that $F^1/F^2$ is generated by $\mathcal{L}^{\otimes\frac{2g-2}{(m,2g-2)}}$, hence $\mathrm{H}^2(\mathcal{M}_g,\mu_m)=\pi^*\mathrm{H}^2(\mathcal{M}_{g},\mu_m)+\delta(\mathrm{Pic}(\mathcal{M}_{g,1}))$ for all $g\geq3$.
\end{proof}

\subsection{Relation with \cite[Theorem 1.2]{zbMATH01991000}}
By Proposition \ref{vanishing-brauer}, we calculate that $\mathrm{Br}'(\mathcal{M}_{3,1})=\mathbb{Z}/2\mathbb{Z}$ over $\mathbb{C}$. By \cite[Proposition 4.1]{zbMATH07456405}, we have $$\mathrm{Br}'(\mathcal{M}_{3,1})=\mathrm{H}_2(\mathcal{M}_{3,1},\mathbb{Z})_{\mathrm{tors}}=\mathrm{H}_2(\Gamma_3^1,\mathbb{Z})_{\textrm{tors}}.$$ In \cite[Theorem 1.2, Proposition 1.4]{zbMATH01991000}, it is proved that $$\mathrm{H}_2(\Gamma_3,\mathbb{Z})=A_1\oplus\mathbb{Z},\ \mathrm{H}_2(\Gamma_{3,1},\mathbb{Z})=A_2\oplus\mathbb{Z},$$ $$\mathrm{H}_2(\Gamma_3^1,\mathbb{Z})=\mathrm{H}_2(\Gamma_{3,1},\mathbb{Z})\oplus\mathbb{Z}=A_2\oplus\mathbb{Z}^{\oplus2},$$ with $0\subseteq A_2\subseteq A_1\subseteq \mathbb{Z}/2\mathbb{Z}$. Therefore $A_1=A_2=\mathbb{Z}/2\mathbb{Z}$ and \begin{enumerate}
\item $\mathrm{H}_2(\Gamma_{3,1},\mathbb{Z})=(\mathbb{Z}/2\mathbb{Z})\oplus \mathbb{Z}$, $\mathrm{H}_2(\Gamma_3,\mathbb{Z})=( \mathbb{Z}/2\mathbb{Z})\oplus \mathbb{Z}$. \item $\mathrm{H}_2(\Gamma_3^1,\mathbb{Z})=(\mathbb{Z}/2\mathbb{Z})\oplus\mathbb{Z}^{\oplus2}.$
\end{enumerate}

\section*{Acknowledgements}
The author thanks Dingxin Zhang for helpful comments on an earlier version of this work. The author is grateful to Professor Richard Hain for sharing with him the results of Morita and a topological version of the Franchetta conjecture in $g\geq3$ \cite[Theorem 24]{zbMATH01590471}, which encouraged the author to think more about the $g=3$ case.

\bibliographystyle{alpha}
\bibliography{references}

@article{zbMATH01991000,
 author = {Korkmaz, Mustafa and Stipsicz, Andr{\'a}s I.},
 title = {The second homology groups of mapping class groups of orientable surfaces},
 fjournal = {Mathematical Proceedings of the Cambridge Philosophical Society},
 journal = {Math. Proc. Camb. Philos. Soc.},
 issn = {0305-0041},
 volume = {134},
 number = {3},
 pages = {479--489},
 year = {2003},
 language = {English},
 doi = {10.1017/S0305004102006461},
 url = {hdl.handle.net/11511/41376},
 zbMATH = {1991000},
 Zbl = {1040.57012}
}

@article{zbMATH01590471,
 author = {Hain, Richard and Reed, David},
 title = {Geometric proofs of some results of {Morita}},
 fjournal = {Journal of Algebraic Geometry},
 journal = {J. Algebr. Geom.},
 issn = {1056-3911},
 volume = {10},
 number = {2},
 pages = {199--217},
 year = {2001},
 language = {English},
 zbMATH = {1590471},
 Zbl = {0986.14017}
}

@misc{arXiv:2509.09661,
 author = {Andrea Di Lorenzo},
 title = {Cohomological invariants of {$\mathcal{M}_{3,n}$} via level structures},
 year = {2025},
 howpublished = {Preprint, {arXiv}:2509.09661 [math.{AG}] (2025)},
 url = {https://arxiv.org/abs/2509.09661},
 arXiv = {arXiv:2509.09661}
}

@book{zbMATH05798333,
 author = {Arbarello, Enrico and Cornalba, Maurizio and Griffiths, Phillip A.},
 title = {Geometry of algebraic curves. {Volume} {II}. {With} a contribution by {Joseph} {Daniel} {Harris}},
 fseries = {Grundlehren der Mathematischen Wissenschaften},
 series = {Grundlehren Math. Wiss.},
 issn = {0072-7830},
 volume = {268},
 isbn = {978-3-540-42688-2; 978-3-540-69392-5},
 year = {2011},
 publisher = {Berlin: Springer},
 language = {English},
 doi = {10.1007/978-3-540-69392-5},
 zbMATH = {5798333},
 Zbl = {1235.14002}
}

@article{zbMATH08105076,
 author = {Di Lorenzo, A. and Pirisi, R.},
 title = {The {Brauer} groups of moduli of genus three curves, abelian threefolds and plane curves},
 fjournal = {Compositio Mathematica},
 journal = {Compos. Math.},
 issn = {0010-437X},
 volume = {161},
 number = {7},
 pages = {1664--1697},
 year = {2025},
 language = {English},
 doi = {10.1112/S0010437X25007481},
 zbMATH = {8105076},
 Zbl = {1578.14023}
}

@article{zbMATH07275229,
 author = {Antieau, Benjamin and Meier, Lennart},
 title = {The {Brauer} group of the moduli stack of elliptic curves},
 fjournal = {Algebra \& Number Theory},
 journal = {Algebra Number Theory},
 issn = {1937-0652},
 volume = {14},
 number = {9},
 pages = {2295--2333},
 year = {2020},
 language = {English},
 doi = {10.2140/ant.2020.14.2295},
 zbMATH = {7275229},
 Zbl = {1459.14007}
}

@incollection{zbMATH06492666,
 author = {Wahl, Nathalie},
 title = {Homological stability for mapping class groups of surfaces},
 booktitle = {Handbook of moduli. Volume III},
 isbn = {978-1-57146-259-6; 978-1-57146-265-7},
 pages = {547--583},
 year = {2013},
 publisher = {Somerville, MA: International Press; Beijing: Higher Education Press},
 language = {English},
 zbMATH = {6492666},
 Zbl = {1322.57016}
}

@book{zbMATH03674235,
 author = {Milne, J. S.},
 title = {{\'E}tale cohomology},
 fseries = {Princeton Mathematical Series},
 series = {Princeton Math. Ser.},
 volume = {33},
 year = {1980},
 publisher = {Princeton University Press, Princeton, NJ},
 language = {English},
 zbMATH = {3674235},
 Zbl = {0433.14012}
}

@article{zbMATH07456405,
 author = {Fringuelli, Roberto and Pirisi, Roberto},
 title = {The {Brauer} group of the universal moduli space of vector bundles over smooth curves},
 fjournal = {IMRN. International Mathematics Research Notices},
 journal = {Int. Math. Res. Not.},
 issn = {1073-7928},
 volume = {2021},
 number = {18},
 pages = {13609--13644},
 year = {2021},
 language = {English},
 doi = {10.1093/imrn/rnz300},
 url = {hdl.handle.net/11573/1572649},
 zbMATH = {7456405},
 Zbl = {1487.14050}
}

@article{zbMATH02078578,
 author = {Schr{\"o}er, Stefan},
 title = {The strong {Franchetta} conjecture in arbitrary characteristics.},
 fjournal = {International Journal of Mathematics},
 journal = {Int. J. Math.},
 issn = {0129-167X},
 volume = {14},
 number = {4},
 pages = {371--396},
 year = {2003},
 language = {English},
 doi = {10.1142/S0129167X03001752},
 zbMATH = {2078578},
 Zbl = {1059.14058}
}

@book{zbMATH07384449,
 author = {Colliot-Th{\'e}l{\`e}ne, Jean-Louis and Skorobogatov, Alexei N.},
 title = {The {Brauer}-{Grothendieck} group},
 fseries = {Ergebnisse der Mathematik und ihrer Grenzgebiete. 3. Folge},
 series = {Ergeb. Math. Grenzgeb., 3. Folge},
 issn = {0071-1136},
 volume = {71},
 isbn = {978-3-030-74247-8; 978-3-030-74248-5},
 year = {2021},
 publisher = {Cham: Springer},
 language = {English},
 doi = {10.1007/978-3-030-74248-5},
 zbMATH = {7384449},
 Zbl = {1490.14001}
}
\end{document}